\newcommand\eps{\varepsilon}
\newcommand\Lip{\mathrm{Lip\,}}
\newcommand\R{\mathbb{R}}
\newcommand\N{\mathbb{N}}
\newcommand\calH{\mathcal{H}}
\newcommand\calM{\mathcal{M}}
\newcommand\calL{\mathcal{L}}
\newcommand{\LM}[1]{\hbox{\vrule width.2pt \vbox to#1pt{\vfill \hrule width#1pt height.2pt}}}
\newcommand{\LL}{{\mathchoice{\,\LM7\,}{\,\LM7\,}{\,\LM5\,}{\,\LM{3.35}\,}}}
\newtheorem{theorem}{Theorem}[section]
\newtheorem{definition}[theorem]{Definition}
\newtheorem{lemma}[theorem]{Lemma}
\newtheorem{remark}[theorem]{Remark}
\newtheorem{example}[theorem]{Example}
\numberwithin{equation}{section}
\newcounter{Nummer}
\newcommand{\A}{A}
\newcommand\res{\mathop{\hbox{\vrule height 7pt width .5pt depth 0pt
\vrule height .5pt width 6pt depth 0pt}}\nolimits}
\newcommand{\Hn}{{\mathcal H}^{n-1}}
\newcommand{\Rn}{{\R}^n}
\newcommand{\ba}[1]{\begin{eqnarray} #1 \end{eqnarray}}
\newcommand{\be}[1]{\begin{equation} #1 \end{equation}}
\newcommand{\bes}[1]{\begin{eqnarray*} #1 \end{eqnarray*}}
\newcommand{\Ln}{{\mathcal L}^n}
\begin{document}
\begin{center}
  {\Large
Which special functions of bounded deformation\\[2mm]
have bounded variation?}\\[5mm]
{\today}\\[5mm]
Sergio Conti$^{1}$, Matteo Focardi$^{2}$, and Flaviana Iurlano$^{1}$\\[2mm]
{\em $^{1}$
 Institut f\"ur Angewandte Mathematik,
Universit\"at Bonn\\ 53115 Bonn, Germany}\\[1mm]
{\em $^{2}$ DiMaI, Universit\`a di Firenze\\ 50134 Firenze, Italy}\\[3mm]
\begin{minipage}[c]{0.85\textwidth}
   {\bf Abstract.} Functions of bounded deformation ($BD$) arise naturally in the study of fracture and damage
    in a geometrically linear context. They are related to functions of bounded
    variation ($BV$), but are less well understood. We discuss here the 
    relation to $BV$ under additional regularity assumptions, which may require 
    the regular part of the strain to have higher integrability or the jump set to have finite area or the Cantor
    part to vanish. 
    On the positive side, we prove that $BD$ functions which are piecewise
    affine on a Caccioppoli partition are in $GSBV$, and we prove that 
     $SBD^p$ functions are approximately continuous $\calH^{n-1}$-a.e. away from the jump set. 
     On the negative side, we construct a function which is $BD$ but not in $BV$ and has distributional 
     strain consisting only of a jump part, and one which has a distributional strain consisting of 
      only a Cantor part.     
    \end{minipage}
\end{center}

\section{Introduction}

The space $BD(\Omega)$ of functions of bounded deformation  is 
characterized by the fact that the symmetric part of the distributional gradient
$Eu:=(Du+Du^T)/2$ is a bounded Radon measure, 
\begin{equation}
 BD(\Omega):=\{u\in  L^1(\Omega;\R^n): Eu \in \calM(\Omega;\R^{n\times n}_\mathrm{sym})\},
\end{equation}
where $\Omega\subseteq\R^n$ is an open set.
$BD$ constitutes the natural setting
for the study of plasticity,
damage and fracture models in a geometrically linear framework
\cite{Suquet1978a,Temam1983,TemamStrang1980,AnzellottiGiaquinta1980,KohnTemam1983}.
Despite its importance, it is not yet completely understood. 

One crucial property of $BD$ functions is that the strain can be decomposed in a part absolutely continuous
with respect to the Lebesgue measure $\calL^n$, a jump part and a third part, called Cantor part,
\begin{equation}\label{eqEudecommp}
 Eu =e(u) \calL^n + \frac{[u]\otimes \nu+\nu\otimes [u]}{2} \calH^{n-1}\LL J_u + E^cu\,.
\end{equation}
Here $J_u$ is the jump set, which is a $(n-1)$-rectifiable subset of $\Omega$, $[u]:J_u\to\R^n$ is the jump 
of $u$, and $\nu$ the normal to $J_u$, see \cite{amb-cos-dal} for details. This decomposition
is very similar to the one holding for the space of functions of bounded
variation ($BV$),
which is defined as the set of $L^1$ functions whose gradient is a bounded measure
\cite{ambrosio},
but $BD$ is strictly larger than $BV$.
A function such that the symmetric part of the gradient is integrable, but the
full gradient is not integrable, was first constructed by Ornstein in 1962
\cite{ornstein}, a simpler construction was  obtained in \cite{ContiFaracoMaggi2005}
using laminates with unbounded support, a much more general statement was then 
proven in  \cite{KirchheimKristensen2011}.  In all these examples only the first term in 
(\ref{eqEudecommp}) is nonzero. It is therefore natural to ask whether a function $u\in BD(\Omega)$ such that
 $e(u)$ has higher integrability may be in $BV(\Omega)$. 
 We address this question in Section~\ref{sbd-purejump} below.
 We remark that 
 the answer would  obviously be positive 
 if instead $Eu=e(u)\calL^n$ with $e(u)\in L^p$, since in this case $u\in W^{1,p}$ by Korn's inequality.

In the modeling of fracture in linear elasticity
one often focuses on  the set of special functions of 
bounded deformation 
\begin{equation*}
 SBD(\Omega):=\{u\in BD(\Omega): E^cu=0\}\,,
\end{equation*}
see for example \cite{FrancfortMarigo1998,Chambolle2003,SchmidtFraternaliOrtiz2008,BourdinFrancfortMarigo2008,FocardiIurlano2014,Iurlano2014}; here $e(u)$ is interpreted as an elastic strain, and the jump part as a fracture term.
The space $SBD$ is however only closed with respect to topologies that entail a bound stronger
than $L^1$ on the regular part $e(u)$ of the strain, and a constraint on the $\calH^{n-1}$ measure of the jump set.
Therefore one naturally considers the subspace $SBD^p$, defined for $p\in (1,\infty)$ as
\begin{equation}\label{eqefsbdp}
 SBD^p(\Omega):=\{u\in BD(\Omega): E^cu=0,\hskip1mm e(u)\in L^p(\Omega),\hskip1mm \calH^{n-1}(J_u)<\infty\}\,,
\end{equation}
 see for example
\cite{BellettiniCosciaDalmaso1998,Chambolle2004a,Chambolle2004b}. 

The analysis of models based on $BD$ and $SBD^p$ functions often requires
knowledge of their fine properties,
which are still not well understood. In contrast, in the $BV$ framework
fine properties are known in much more detail \cite{ambrosio}, and have proven
useful for example in studying relaxation \cite{bou-bra-but,bou-fon-masc,bou-fon-leo-masc}.
Correspondingly, the study of lower semicontinuity and relaxation in $BD$ is still 
in its beginnings. Lower semicontinuity was studied in $SBD^p$ 
by Bellettini, Coscia and Dal Maso 
\cite{BellettiniCosciaDalmaso1998}
(see \cite{GargZap11} for further results on surface integrals and related references),
 and more recently in  $BD$  by Rindler \cite{Rindler2011} 
(see \cite{EbobisseLD} for partial results).
Relaxation in $BD$ was studied
in \cite{BraidesDefranceschiVitali1997,BraidesDefranceschiVitali2002,ContiOrtiz05} for specific plasticity models,
in \cite{BarFonToa00} for autonomous functionals with linear growth defined on $W^{1,1}$,
an extension to the full space $BD$ is provided in \cite{Rindler2011}. 
General integral representation result have been established for certain functionals with linear growth 
restricted to $SBD$ in \cite{EboToa03}. 
Compactness and approximation results in $SBD^p$ with more regular functions have been 
obtained in \cite{BellettiniCosciaDalmaso1998,Chambolle2004a,Chambolle2004b,Iurlano2014}. 

It is therefore important to 
 understand the relation between $BD$, $SBD^p$ and $BV$. 
In this paper, we contribute to this topic in studying three different problems.

First, we show that if $u\in SBD(\Omega)$ is a piecewise rigid displacement, in the sense that 
the strain $Eu$ only consists of a jump part and the total length of the 
jumps is finite, then $u\in GSBV(\Omega;\R^n)$, see Theorem \ref{theocacciopp} for a precise formulation.
In particular, if $u$ is bounded then $u\in SBV(\Omega;\R^n)$.
The class of piecewise rigid displacements arises naturally in analyzing rigidity properties in the 
framework of linearly 
elastic fracture mechanics (see, for example, \cite{cha-gia-pon}).

Secondly, we construct a function in $SBD(\Omega)\setminus GBV(\Omega;\R^n)$ which has $e(u)=0$ almost everywhere 
(Theorem~\ref{t:purejump}). 
This is based on a modification of the construction from \cite{ContiFaracoMaggi2005}.  A variant of our 
construction leads to a function in $BD(\Omega)\setminus GBV(\Omega;\R^n)$ which has $e(u)=0$ almost everywhere 
and no jump, so that $Eu=E^cu$, see Theorem \ref{t:purecantor}.

 One of the main properties of $BV$ functions is that they are approximately continuous
 $\calH^{n-1}$-almost everywhere away from the jump points, and not only $\calL^n$-almost everywhere as
 any integrable function. It is still unknown if this property holds also for functions in $BD$.
 In Section~\ref{sbd-continuity} we show that $SBD^p$ functions, $p>1$, are approximately 
 continuous at $\calH^{n-1}$-almost every point away from the jump set, see Theorem~\ref{SuJu}.

 Finally, in view of the previous results, in Section~\ref{discussion} we discuss 
 the possibility that $SBD^p$ functions are actually of bounded variation.

\section{Caccioppoli-affine functions have (generalized) bounded variation}\label{sbd-caccioppoli}

In this section we show that piecewise affine functions induced by Caccioppoli partitions
have components in $GSBV$. For the theory of Caccioppoli partitions we refer to \cite[Section 4.4]{ambrosio}. 
In particular, for a given open set $\Omega\subset\R^n$ we consider a countable family $\mathscr{E}=(E_k)_k$ 
of sets $E_k\subseteq\Omega$ with finite perimeter in $\Omega$ satisfying 
\[
\calL^n\big(\Omega\setminus \cup_kE_k\big)=0,\quad
\calL^n(E_i\cap E_k)=0 \text{ for } i\neq k,\quad \sum_k\calH^{n-1}(\partial^\ast E_k\cap \Omega)<\infty,
\]
where $\partial^\ast E_k$ is the essential  boundary of $E_k$ (see \cite[Definition 3.60]{ambrosio}).
The \emph{set of interfaces} $J_{\mathscr{E}}$ of the Caccioppoli partition $\mathscr{E}$ is defined as 
the union of the sets $\partial^\ast E_k\cap \Omega$. It is known that
 \cite[Theorem 4.23]{ambrosio}. 
\[
\calH^{n-1}(J_{\mathscr{E}})=\frac12\sum_k\calH^{n-1}(\partial^\ast E_k\cap \Omega).
\]
We call a function $u:\Omega\to\R^m$ \emph{Caccioppoli-affine} if there exist matrices $\A_k\in\R^{m\times n}$ 
and vectors ${b}_k\in\R^m$ such that
\begin{equation}\label{e:caf}
u(x)=\sum_k\big(\A_kx+{b}_k\big)\chi_{E_k}(x)\,,
\end{equation}
where $(E_k)_k$ is a Caccioppoli partition of $\Omega$.

Functions of this type have already been studied in the literature. 
In particular, in \cite[Theorem A.1]{cha-gia-pon} it was proven that 
any function $u\in SBD(\Omega)$ with $e(u)=0$ $\calL^n$-a.e. on $\Omega$ and $\calH^{n-1}(J_u)<\infty$
is a \emph{piecewise rigid displacement}, which is defined as Caccioppoli-affine function
with $m=n$ and the matrices $A_k$ skew-symmetric.
Moreover, piecewise rigid displacements have been employed by Dal Maso \cite{gbd} to provide examples 
of fields in $GSBD\setminus SBD(\Omega)$. Elementary variations of such constructions 
prove that $SBD^p(\Omega)\setminus SBV^p(\Omega;\mathbb{R}^n)\neq\emptyset$ for every $p>1$. 
Even though this is probably a well-known fact we provide an explicit construction, 
since we have not been able to find any reference in literature. 
\begin{example}\label{example}

Set $\Omega:=B_1(0)\subset\R^n$ and for every $k\in \mathbb{N}$, choose $B_k:=B_{r_k}(x_k)\subset \Omega$,
so that the $B_k$ are pairwise disjoint, the centers $x_k$ converge 
to some point $x_\infty$, and the radii are $r_k:=2^{-k}$. 
We define 
\begin{equation*}
u(x):=\sum_k d_kA(x-x_k)\chi_{B_k}(x)\,,
\end{equation*}
where $A=(a_{ij})\in\R^{n\times n}$ with $a_{12}=-a_{21}=1$ and $a_{ij}=0$ otherwise, and 
$$
d_k:=\frac{2^{nk}}{k^2}\,.
$$ 
Then $u\in SBD(\Omega)\cap SBV(\Omega;\Rn)\cap L^\infty(\Omega;\Rn)$ with $e(u)=0$ $\mathcal{L}^n$-a.e. and 
$\Hn(J_u)<\infty$, but for every $q>1$ one has $\nabla u\notin L^q(\Omega;\R^{n\times n})$.  
\end{example}

Nevertheless Caccioppoli-affine displacements belong to $GSBV(\Omega;\R^m)$. We recall that $w\in GSBV(\Omega;\R^m)$ 
if $\phi(w)\in SBV_\mathrm{loc}(\Omega)$ for all $\phi\in C^1(\R^m)$ such that $\nabla\phi$ has compact support (cp. 
\cite[Definition~4.26]{ambrosio}). In the scalar case on can reduce simply to truncations (cp. \cite[Remark~4.27]{ambrosio})
\begin{theorem}\label{theocacciopp}
Let $\Omega\subseteq\R^n$ be a bounded Lipschitz set, 
$u:\Omega\to\R^m$ be Caccioppoli-affine. Then $u\in\big(GSBV(\Omega)\big)^m\subset GSBV(\Omega;\R^m)$ with 
\begin{equation}\label{e:deru}
\nabla u=\A_k\quad \text{$\calL^n$-a.e. on $E_k$, and }\quad \calH^{n-1}(J_u\setminus J_{\mathscr{E}})=0,
\end{equation}
(in the last formula we use the notation introduced in \eqref{e:caf}).

In particular, if $m=n$ and $u\in SBD(\Omega)$ with $e(u)=0$ $\calL^n$-a.e. on $\Omega$ 
and $\calH^{n-1}(J_u)<\infty$ then $u\in \big(GSBV(\Omega)\big)^m\subset GSBV(\Omega;\R^n)$.
\end{theorem}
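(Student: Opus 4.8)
The plan is to reduce everything to the definition of $GSBV$ via truncations and then to establish the bounded–variation property for each truncated scalar component. Since $u\in(GSBV(\Omega))^m$ means that every scalar component lies in $GSBV(\Omega)$, and since in the scalar case membership in $GSBV$ is tested on truncations (see \cite{ambrosio}), I fix one component (still denoted $u$, so that $u(x)=a_k\cdot x+b_k$ on $E_k$ with $a_k\in\R^n$) together with a level $M>0$, and set $w:=\tau_M(u)$ where $\tau_M(s):=(s\wedge M)\vee(-M)$. On $E_k$ one has $w=g_k:=\tau_M(a_k\cdot x+b_k)$, a bounded Lipschitz function whose gradient equals $a_k$ on the slab $S_k:=E_k\cap\{|a_k\cdot x+b_k|<M\}$ and vanishes elsewhere. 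The goal is then to show $w\in SBV(\Omega)$ for every such $M$; this is precisely the assertion $u\in GSBV(\Omega)$.

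The heart of the matter, and the main obstacle, is that $\nabla u$ need not be integrable, as Example~\ref{example} already shows, so one cannot control the absolutely continuous part of $Dw$ by $\int_\Omega|\nabla u|$. Instead I would exploit that truncation confines the active gradient to the slab $S_k$, which has width $2M/|a_k|$ in the direction $a_k$. Slicing $E_k$ by lines parallel to $a_k$, and using both that each such line meets $S_k$ in length at most $2M/|a_k|$ and the elementary fact that the orthogonal projection of a set of finite perimeter has $\calH^{n-1}$-measure at most $\tfrac12\calH^{n-1}(\partial^\ast E_k)$, I obtain the key estimate
\[
\int_{E_k}|\nabla w|\,dx=|a_k|\,\calL^n(S_k)\le M\,\calH^{n-1}(\partial^\ast E_k).
\]
Summing over $k$ and invoking $\sum_k\calH^{n-1}(\partial^\ast E_k)<\infty$ (the finitely many boundary contributions on $\partial\Omega$ being harmless since the $E_k$ are disjoint and $\Omega$ is Lipschitz) bounds the absolutely continuous part of $Dw$; the jump part is controlled by $2M\,\calH^{n-1}(J_{\mathscr{E}})$, since across any interface $w$ oscillates by at most $2M$. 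Applying these uniform bounds to the partial sums $w_N:=\sum_{k\le N}g_k\chi_{E_k}$, which are manifestly $BV$ and converge to $w$ in $L^1(\Omega)$, and using lower semicontinuity of the total variation, yields $w\in BV(\Omega)\cap L^\infty(\Omega)$.

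It remains to exclude a Cantor part, for which I would use the one–dimensional slicing characterization of $SBV$ (see \cite{ambrosio}): it suffices to show that for $\calH^{n-1}$-a.e. line in each of $n$ independent directions the restriction of $w$ lies in $SBV$ of that line. The perimeter bound $\sum_k\calH^{n-1}(\partial^\ast E_k)<\infty$ guarantees, through the integral–geometric slicing of the interfaces, that for a.e. such line only finitely many interface crossings occur; between consecutive crossings the restriction coincides with a truncated affine, hence Lipschitz, function, and a one–dimensional $BV$ function that is Lipschitz off a finite set has no Cantor part. Thus every slice lies in $SBV$ and $D^cw=0$, so $w\in SBV(\Omega)$; since this holds for every $M$ and every component, $u\in(GSBV(\Omega))^m$. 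The identification $\nabla u=\A_k$ $\calL^n$-a.e.\ on $E_k$ follows because near density points of $E_k$ the function $u$ agrees with the affine map $\A_kx+b_k$, while $\calH^{n-1}(J_u\setminus J_{\mathscr{E}})=0$ holds as jumps can occur only across interfaces. Finally, the ``in particular'' statement is immediate: by \cite[Theorem~A.1]{cha-gia-pon} such a $u$ is a piecewise rigid displacement, hence Caccioppoli-affine, and the general result applies.
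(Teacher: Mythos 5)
Your proof is correct, and it reaches the conclusion by a genuinely different route than the paper's. The shared skeleton is the same: reduction to scalar components, truncation, partial sums with a uniform $BV$ bound, passage to the limit. The two key steps differ. First, for the gradient estimate the paper never slices: writing $u_k=v_k\chi_{E_k}$ with $v_k$ the affine piece, it applies the divergence theorem on each $E_k$ with the constant test field $\Phi\equiv A_k$, obtaining
\begin{equation*}
\int_{E_k}|\nabla u_k|\,dx\;\le\;\int_{\partial^\ast E_k}|v_k|\,d\calH^{n-1}\;\le\;\|u\|_{L^\infty}\,\calH^{n-1}(\partial^\ast E_k)\,,
\end{equation*}
whereas you derive the analogous bound (with $M$ in place of $\|u\|_{L^\infty}$) from Fubini along the direction $a_k$ plus the fact that a.e.\ line meeting $E_k$ must cross $\partial^\ast E_k$ at least twice. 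Your argument makes more transparent why truncation helps (it confines the active gradient to a slab of controlled volume); the paper's handles the bounded case without truncating at all and extends to truncations via the chain rule. Second, to rule out the Cantor part the paper observes that the partial sums converge in $BV$ \emph{norm} (the series of total variations converges) and that $SBV(\Omega)$ is a norm-closed subspace of $BV(\Omega)$, while you invoke the one-dimensional slicing characterization of $SBV$, using that a.e.\ line crosses $J_{\mathscr{E}}$ finitely often and that slices are Lipschitz off those crossings. Both are sound; the paper's is shorter, yours is more elementary and self-contained. Two small points to tighten (the paper is equally terse on the first): the boundary contributions on $\partial\Omega$ are countably many, not ``finitely many''; they are summable because, by Federer's theorem, $\calH^{n-1}$-a.e.\ point of $\partial^\ast E_k$ is a point of density $\frac12$ for $E_k$, and disjointness of the $E_k$ inside $\Omega$ then forces the sets $\partial^\ast E_k\cap\partial\Omega$ to be $\calH^{n-1}$-essentially pairwise disjoint, whence $\sum_k\calH^{n-1}(\partial^\ast E_k\cap\partial\Omega)\le\calH^{n-1}(\partial\Omega)<\infty$. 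Also, your projection inequality should be read for the \emph{essential} projection (the set of lines meeting $E_k$ in positive length), which is what Fubini produces, and it requires $E_k$ to have finite perimeter in $\R^n$ rather than merely in $\Omega$ --- this is precisely where the hypothesis that $\Omega$ is bounded and Lipschitz (so $\calH^{n-1}(\partial\Omega)<\infty$) enters.
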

\begin{proof} 
It suffices to prove the first assertion in the case $m=1$, assuming that $u$ is a scalar map of the form
$u=\sum_k(A_k\cdot x+b_k)\chi_{E_k}$, with $A_k\in\Rn$ and $b_k\in\R$.

For all $k\in\mathbb{N}$ consider $v_k:=A_k\cdot x+b_k$ and $u_k:=v_k\chi_{E_k}$ 
so that $u=\sum_ku_k$. Clearly, $u_k\in SBV(\Omega)$ with
\begin{equation}\label{e:dervj}
Du_k=A_k\,\calL^n\res E_k+v_k\,\nu_{E_k}\,\calH^{n-1}\res(\partial^\ast E_k\cap \Omega),
\end{equation}
with $\nu_{E_k}$ the generalised inner normal to $E_k$.
For $\Phi\in C^1(\overline{\Omega};\mathbb{R}^n)$ we use the divergence theorem to calculate
\[
\int_\Omega u_k\,\mathrm{div}\,\Phi\,dx+\int_\Omega\Phi\cdot d\,Du_k=
\int_{\partial\Omega\cap\partial^\ast E_k}\,v_k(\Phi\cdot\nu_{\partial \Omega})\,d\calH^{n-1},
\]
where $\nu_{\partial \Omega}$ is the outer normal to $\partial\Omega$. The specific choice $\Phi\equiv A_k$ 
and \eqref{e:dervj} yield
\[
|A_k|^2\,\calL^n(E_k)=
-\int_{\partial^\ast E_k\cap \Omega}v_k\,A_k\cdot\nu_{E_k}\,d\calH^{n-1}
+\int_{\partial^\ast E_k\cap \partial\Omega}v_k\,A_k\cdot\nu_{\partial\Omega}\,d\calH^{n-1},
\]
and, as $\nu_{E_k}=-\nu_{\partial\Omega}$ $\calH^{n-1}$-a.e. on
$\partial^\ast E_k\cap \partial\Omega$, we conclude that
\begin{multline}\label{e:gradukvk}
\int_{E_k}|\nabla u_k|\,dx=
-\int_{\partial^\ast E_k}v_k\,\frac{A_k}{|A_k|}\cdot\nu_{E_k}\,d\calH^{n-1}
\leq\int_{\partial^\ast E_k}|v_k|\,d\calH^{n-1}.
\end{multline}
In particular, setting $w_j:=\sum_{k=1}^ju_k$, it is clear that $w_j\in SBV(\Omega)$ and that
\[
Dw_j=\sum_{k=1}^jDu_k.
\]
Therefore, summing on $k\in\{1,\ldots,j\}$ inequality \eqref{e:gradukvk}, we deduce that
\begin{equation}\label{e:vartot}
|D w_j|(\Omega)\leq \sum_{k=1}^j|D u_k|(\Omega)
\leq 2\sum_{k=1}^j\int_{\partial^\ast E_k} |v_k| d\calH^{n-1}.
\end{equation}
Hence, assuming $u\in L^{\infty}(\Omega)$ we conclude for all $j\in\N$ that 
\begin{equation*}
|Dw_j|(\Omega)\leq 4\|u\|_{L^\infty(\Omega)}
\,\big(\calH^{n-1}(J_{\mathscr{E}})+\calH^{n-1}(\partial\Omega)\big).
\end{equation*}
In particular,  $(w_j)_j$ has equi-bounded $BV$ norm. Since
$w_j\to u$ in $L^1(\Omega)$ we conclude that $u\in BV(\Omega)$.
Actually, the sequence $(w_j)_k$ converges in $BV(\Omega)$ norm in view of \eqref{e:vartot}, 
and being $SBV(\Omega)$ a closed subspace of $BV(\Omega)$ we infer that $u\in SBV(\Omega)$. 

In the general case the conclusion $u\in GSBV(\Omega)$ follows by applying the argument above
to the truncated functions $w^M_j:=\sum_{k=1}^j\phi_M(u_k)$, $\phi_M(t):=t\wedge M \vee(-M)$, 
$M\in\N$, and using the chain rule formula to compute the distributional derivative. 
The identitities in \eqref{e:deru} are a consequence of \eqref{e:dervj}.

The second assertion follows immediately using the mentioned \cite[Theorem A.1]{cha-gia-pon}.
\end{proof}

\begin{remark}
The first assertion in Theorem~\ref{theocacciopp} is optimal: examples of Caccioppoli affine functions 
in $GSBV\setminus BV(\Omega)$, though not in $BD(\Omega)$, can be easily constructed. 
\end{remark}

\section{Pure-jump $BD$ functions not in $BV$}\label{sbd-purejump}

Contrary to the previous section, functions with vanishing symmetrized strain and jump set of infinite measure 
are not necessarily in the space $GSBV$.

\def\rank{\mathrm{rank}}
\begin{theorem}\label{t:purejump}
For any nonempty open set $\Omega\subseteq\R^n$ there is $u\in SBD(\Omega)\cap L^\infty(\Omega;\Rn)$
such that $e(u)=0$ $\calL^n$-a.e., $\Hn(J_u)=\infty$, and $\nabla u\notin L^1(\Omega;\Rn)$. 
In particular $u\not\in GBV(\Omega;\Rn)$.
\end{theorem}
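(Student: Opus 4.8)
The plan is to realize $u$ as a \emph{piecewise rigid displacement}, i.e.\ a Caccioppoli-affine function as in \eqref{e:caf} with $m=n$ and each $\A_k$ skew-symmetric, built on a sequence of cells $(E_k)_k$ inside $\Omega$. Since $\Omega$ is open and nonempty it contains a closed cube $Q$; I will carry out the construction on $Q$ and extend by $0$ to $\Omega\setminus Q$, which only adds a single interface $\partial Q$ of finite area and bounded jump, hence affects none of the asserted infinities. Writing $u=\sum_k(\A_kx+b_k)\chi_{E_k}$ with $\A_k\in\skew(n)$ forces $e(u)=0$ $\Ln$-a.e.\ automatically, and the approximate gradient is $\nabla u=\A_k$ on $E_k$. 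The governing observation is that the $BD$/$BV$ distinction here lives entirely in the absolutely continuous part: one has the pointwise bound $|\sym(a\otimes\nu)|\ge|a|/\sqrt2$ for every $a\in\Rn$ and unit $\nu$, so the jump part of $Eu$, namely $\int_{J_u}|\sym([u]\otimes\nu)|\,d\Hn$, is comparable to the jump part $\int_{J_u}|[u]|\,d\Hn$ of $Du$. Consequently, once we secure $E^cu=0$, the function lies in $SBD$ precisely when $\int_{J_u}|[u]|\,d\Hn<\infty$, while it fails to lie in $BV$ exactly when its surviving absolutely continuous (skew) part is non-integrable, $\sum_k|\A_k|\,\Ln(E_k)=\infty$.

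The target of the construction is therefore to arrange, with $u$ bounded:
\begin{equation*}
\int_{J_u}|[u]|\,d\Hn<\infty,\qquad \sum_k|\A_k|\,\Ln(E_k)=\infty .
\end{equation*}
I note first that $\Hn(J_u)=\infty$ is then unavoidable and need not be imposed separately: by Theorem~\ref{theocacciopp} any piecewise rigid displacement with $\Hn(J_u)<\infty$ lies in $GSBV$, which the final conclusion excludes; so the geometry must automatically produce interfaces of infinite total area, as small cells with $\sum_k\Hn(\partial^\ast E_k)=\infty$ do.

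For the geometry I would adapt the laminate-with-unbounded-support mechanism of \cite{ContiFaracoMaggi2005}, transported from the interior gradient of a single function to a piecewise-rigid field. Concretely, on a self-similar nested family of cells the skew amplitudes $|\A_k|$ are allowed to grow without bound while the cell diameters shrink, subject to $|\A_k|\,\diam(E_k)\lesssim1$ so that, after centering $b_k$, $u$ stays in $L^\infty$. Across each interface the jump $[u]=(\A_k-\A_{k'})x+(b_k-b_{k'})$ is then controlled by the \emph{local increment} of the rotation amplitude rather than by its absolute size; choosing the amplitudes to increase slowly along the nesting makes the increments, and hence $\int_{J_u}|[u]|\,d\Hn$, summable, which gives $u\in BD$ with $Eu$ purely a jump measure. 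One simultaneously checks $E^cu=0$ (the decomposition \eqref{eqEudecommp} has no Cantor contribution because $u$ is affine on each cell and the accumulation set is $\Ln$- and $\Hn$-negligible), while the slow decay of volumes against growing amplitudes yields $\sum_k|\A_k|\,\Ln(E_k)=\infty$.

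The main obstacle is reconciling these three demands at once: a divergent skew gradient $\sum_k|\A_k|\Ln(E_k)=\infty$, a \emph{finite} jump measure $\int_{J_u}|[u]|\,d\Hn<\infty$, and boundedness of $u$. The tension is genuine, since on adjacent cells a rotation of amplitude $\theta$ on a cell of size $\ell$ displaces points by $\theta\ell$, so that absorbing this mismatch into the constants $b_k$ rather than into the jump induces a drift in $b_k$ that threatens the $L^\infty$ bound; this is exactly the Ornstein non-inequality phenomenon, and it is why a careful self-similar (branching) geometry is needed to make the accumulated drifts cancel while the increments stay summable. This delicate bookkeeping is the part I expect to require the full force of the \cite{ContiFaracoMaggi2005} construction. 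Once it is in place, the conclusion is immediate: $u\in SBD(\Omega)\cap L^\infty(\Omega;\Rn)$ with $e(u)=0$, $\Hn(J_u)=\infty$, and the approximate gradient satisfies $\int_\Omega|\nabla u|\,dx=\sum_k|\A_k|\Ln(E_k)=\infty$, so $\nabla u\notin L^1(\Omega;\Rn)$; since every $GBV$ function has locally integrable approximate gradient, this forces $u\notin GBV(\Omega;\Rn)$.
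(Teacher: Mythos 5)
Your reduction is set up correctly, and the peripheral observations are sound: $|{\sym}(a\otimes\nu)|\ge|a|/\sqrt2$, so for a field with skew-symmetric approximate gradient membership in $SBD$ amounts to $\int_{J_u}|[u]|\,d\Hn<\infty$; and $\Hn(J_u)=\infty$ does come for free from \cite[Theorem A.1]{cha-gia-pon} combined with Theorem~\ref{theocacciopp}. The genuine gap is that after this reduction you have merely restated the theorem: everything hinges on actually producing a bounded, piecewise rigid field with $\int_{J_u}|[u]|\,d\Hn<\infty$ and $\sum_k|\A_k|\,\Ln(E_k)=\infty$, and this you do not construct --- you explicitly defer the ``delicate bookkeeping'' to ``the full force of \cite{ContiFaracoMaggi2005}''. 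That bookkeeping \emph{is} the proof. In the paper it occupies Lemmas~\ref{lemmalaminatebd}, \ref{lemmaaffinejump} and \ref{lemmaconstrcont}: a lamination step with simultaneous $L^\infty$ and jump control, a device replacing affine pieces by piecewise constant ones, and an iteration with the explicit matrices $A_k,B_k,C_k$ in which each step adds the fixed amount $\tfrac23\sqrt2\,\Ln(\Omega_0)$ to $\|\nabla u\|_{L^1}$ while adding only $2\cdot2^{-k}$ to the jump mass. Note also that the paper does \emph{not} build the rigid field directly: during the iteration the gradient retains a nonzero symmetric part ($A_k$, $C_k$) on a set of measure $2^{-k}\Ln(\Omega_0)$, the process is stopped after \emph{finitely} many steps, the residual affine pieces are then converted to piecewise constants, and only the final assembly over countably many disjoint cubes (with $M=2^k$ and convergence in the $BD$ norm) produces the infinite quantities.

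Moreover, the specific geometry you sketch --- nested cells whose skew amplitudes ``increase slowly along the nesting'', with jumps controlled by the increments --- fails for a quantifiable reason, not a bookkeeping one. On a single nested family, the $L^\infty$ bound forces $|\A_k|\,\diam(E_k)\lesssim1$ and then the gradient sum \emph{always converges}: in $n=2$, for concentric annuli $E_k=\{\ell_{k+1}<|x|<\ell_k\}$ and $u=\theta_k Jx$ with $J$ the rotation generator, boundedness gives $|\theta_k|\ell_k\le C$, hence
\begin{equation*}
\sum_k|\theta_k|\,\Ln(E_k)\le \pi C\sum_k\frac{\ell_k^2-\ell_{k+1}^2}{\ell_k}\le 2\pi C\sum_k(\ell_k-\ell_{k+1})\le 2\pi C\,\ell_0<\infty,
\end{equation*}
whatever the choice of $\theta_k$ and $\ell_k$. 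Branching does not rescue slowly varying amplitudes either: wherever the amplitude is large, boundedness confines the region of nearly-constant rotation to diameter $\lesssim1/\theta$, so across nearby cells the rotation must change by its own order of magnitude. This sign-alternating oscillation is exactly the Ornstein mechanism the paper implements: adjacent cells carry $B_k$ and $-B_{k+1}$ --- opposite signs, comparable huge amplitudes, not small increments --- and the jump across their interface is small only because the cells are strips of thickness $1/N$ in the rank-one compatible direction, with constants arranged so that the function tracks one bounded profile to within $2^{-k}$. Two further slips: an infinite iteration within one nest risks producing a Cantor part instead of a jump part (this is precisely how Theorem~\ref{t:purecantor} manufactures $Eu=E^cu$), which is why the paper stops after finitely many steps per cube; and your closing claim that ``every $GBV$ function has locally integrable approximate gradient'' is false in general (in one variable, $1/|x|$ is $GBV$ near $0$ with non-integrable derivative) --- it holds for bounded $u$, but one must additionally arrange the divergence of $\nabla u$ to occur on a compact subset of $\Omega$, e.g.\ by letting the cells or cubes accumulate at an interior point.
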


Our construction is based on a suitable sequence of piecewise affine functions. More precisely we consider 
functions which are piecewise affine on polyhedra, but not necessarily continuous. 
Since there are finitely many pieces and each has a boundary of finite length, these functions all belong to $SBV$.
We recall that a convex polyhedron is a bounded set which is the intersection of finitely many half-spaces.
\begin{definition}
 For a convex polyhedron $\Omega\subset\R^n$ we let $PA(\Omega)$ be the set of functions $u:\Omega\to\R^n$ for which
 there is a decomposition of $\Omega$ into finitely many convex polyhedra such that $u$ is affine on each of them.
  \end{definition}

The next Lemma gives the basic construction step, which corresponds to the lamination used in 
\cite{ContiFaracoMaggi2005}.
  \begin{lemma}\label{lemmalaminatebd}
  Let $u\in PA(\Omega)$, $A,B\in\R^{n\times n}$
  with $\rank(A-B)=1$, $\lambda\in (0,1)$, $C:=\lambda A+(1-\lambda)B$,
  $\omega:=\{x\in\Omega: Du(x)=C\}$.
  For every $\eps>0$ there is $v\in PA(\Omega)$ such that
  $v=u$  on $\Omega\setminus\omega$,
  $Dv\in\{A,B\}$ $\calL^n$-a.e. in $\omega$,
  $\calL^n\big(\{x\in\omega: Dv(x)=A\}\big)=\lambda\,\calL^n(\omega)$, 
  and
  \bes{
	\|u-v\|_{L^\infty(\Omega,\Rn)}\leq \eps,\qquad
  \int_{J_v\cap\Omega} |[v]| d\calH^{n-1}\le \eps + 
   \int_{J_u\cap\Omega} |[u]| d\calH^{n-1}\,.
  }
\end{lemma}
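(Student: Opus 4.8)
The plan is to realize the single lamination between the two gradients $A$ and $B$ by an explicit, globally continuous sawtooth perturbation of $u$ supported on $\omega$, and only afterwards to fix the volume fraction exactly by a translation argument. Since $\rank(A-B)=1$ I would first write $A-B=a\otimes m$ with $m\in\R^n$, $|m|=1$, and record the identities $A-C=(1-\lambda)\,a\otimes m$ and $B-C=-\lambda\,a\otimes m$. For a period $\delta>0$ and a shift $s\in[0,\delta)$ let $g_{\delta,s}\colon\R\to\R$ be the continuous, $\delta$-periodic, piecewise affine function whose derivative equals $1-\lambda$ on the translated ``$A$-slabs'' $s+[k\delta,k\delta+\lambda\delta)$ and $-\lambda$ on the complementary ``$B$-slabs''; since the mean slope over a period vanishes, $g_{\delta,s}$ is well defined, continuous, and bounded by $\|g_{\delta,s}\|_{L^\infty}\le\delta$. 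Setting $w_s:=g_{\delta,s}(m\cdot x)\,a$ and $v_s:=u+w_s\,\chi_\omega$, the computation $Dw_s=g'_{\delta,s}(m\cdot x)\,a\otimes m$ gives $Dv_s=C+(1-\lambda)\,a\otimes m=A$ on the $A$-slabs meeting $\omega$ and $Dv_s=C-\lambda\,a\otimes m=B$ on the $B$-slabs, while $v_s=u$ on $\Omega\setminus\omega$.

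I would then dispatch the qualitative claims. For fixed $\delta$ the bounded set $\omega$, which is a finite union of the convex polyhedra on which $Du=C$, meets only finitely many slabs, so $v_s$ is affine on the finite refinement of the polyhedral decomposition of $u$ by the slab half-spaces; hence $v_s\in PA(\Omega)$. The sup-norm bound is immediate, $\|u-v_s\|_{L^\infty}=\|w_s\|_{L^\infty}\le|a|\,\delta$.

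For the jump term the key observation is that $w_s$ is a \emph{globally continuous} function of $x$ (only its gradient jumps across slab faces) which is supported on $\omega$. Consequently the discontinuities of $v_s$ are those already carried by $u$ together with possible new ones created along $\partial^\ast\omega$: across $J_u\cap\omega$ the perturbation is continuous so $[v_s]=[u]$, across interior slab faces $v_s$ is continuous and contributes nothing to $J_{v_s}$, and across $\partial^\ast\omega$ one has $|[v_s]|\le\|w_s\|_{L^\infty}\le|a|\,\delta$. Since $\omega$ is a finite union of polyhedra, $\calH^{n-1}(\partial^\ast\omega)<\infty$, and I obtain $\int_{J_{v_s}\cap\Omega}|[v_s]|\,d\calH^{n-1}\le\int_{J_u\cap\Omega}|[u]|\,d\calH^{n-1}+|a|\,\delta\,\calH^{n-1}(\partial^\ast\omega)$. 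Choosing $\delta$ small enough that both $|a|\,\delta\le\eps$ and $|a|\,\delta\,\calH^{n-1}(\partial^\ast\omega)\le\eps$ yields the two required estimates simultaneously.

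The one point needing genuine care — and the main obstacle — is the \emph{exact} volume fraction $\calL^n(\{Dv=A\})=\lambda\,\calL^n(\omega)$, because a uniform lamination produces $\lambda$ only up to an error from slabs truncated by $\partial\omega$, and I cannot remove this error by refining the mesh without losing finiteness of the pieces. To obtain equality while staying in $PA(\Omega)$ I would instead exploit the shift $s$: the function $f(s):=\calL^n(\{x\in\omega:Dv_s(x)=A\})$ is continuous (indeed Lipschitz) in $s$, since $\omega$ is polyhedral and the slabs merely translate, while Fubini together with translation invariance gives $\tfrac1\delta\int_0^\delta f(s)\,ds=\lambda\,\calL^n(\omega)$, because each fixed $x\in\omega$ lies in an $A$-slab for a set of shifts of measure $\lambda\delta$. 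By the intermediate value theorem $f$ attains the value $\lambda\,\calL^n(\omega)$ at some $s^\ast\in[0,\delta)$, and taking $v:=v_{s^\ast}$ makes the fraction exactly $\lambda$ without disturbing any of the previous bounds. This shift selection is really the only nontrivial ingredient; the rest is the explicit continuous laminate and routine bookkeeping on the fixed, finite-measure set $\partial^\ast\omega$.
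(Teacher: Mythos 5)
Your proposal is correct and follows essentially the same route as the paper's proof: an explicit $1$-periodic sawtooth laminate in the rank-one direction, followed by a Fubini/intermediate-value shift argument to pin the volume fraction exactly at $\lambda\,\calL^n(\omega)$ (the paper does this polyhedron by polyhedron with tolerances adapted to each $\calH^{n-1}(\partial\omega_j)$, you do it with one global slab system and a single shift — an immaterial difference). One wording slip: across $\partial^\ast\omega$ the jump of $v_s$ is bounded by $|[u]|+\|w_s\|_{L^\infty}$, not by $\|w_s\|_{L^\infty}$ alone, but your final displayed estimate $\int_{J_{v_s}\cap\Omega}|[v_s]|\,d\calH^{n-1}\le\int_{J_u\cap\Omega}|[u]|\,d\calH^{n-1}+|a|\,\delta\,\calH^{n-1}(\partial^\ast\omega)$ already accounts for this and is the correct one.
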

\begin{proof}
 Let $\omega_1,\dots,\omega_k$ be the polyhedra where $Du=C$. 
 Given $\omega_j$, let $w_j\in\Lip(\R^n;\R^n)$ be a function with $Dw_j\in \{A,B\}$
 $\calL^n$-a.e. in $\omega_j$ and
$$\|w_j-u\|_{L^\infty(\omega_j,\Rn)}\le \eps\min\Big\{1,\frac{1}{4k\calH^{n-1}(\partial\omega_j)}\Big\}\,.$$
To construct $w_j$, let $a\in \R^n$ and $\nu\in S^{n-1}$ be such that $A=C+(1-\lambda)a\otimes \nu$, 
and set $w_j(x):=u(x)+N^{-1} a h_\lambda(N x\cdot \nu)$, where $h_\lambda\in \Lip(\R)$ is the $1$-periodic
function which obeys $h_\lambda(0)=0$, $h_\lambda'=1-\lambda$ on $(0,\lambda)$, $h_\lambda'=-\lambda$ on $(\lambda,1)$
and $N$ is sufficiently large
(see for example \cite[Lemma 4.3]{MuellerLectureNotes} or \cite[Section~2 and Lemma~3]{ContiFaracoMaggi2005} for details). 

By translation we can ensure that the volume fractions are the stated ones. Indeed, let $S_A:=\{x\in\Rn:\,Dw_j=A\}$ and $\nu$ as above. Then Fubini's 
theorem gives
\bes{
\int_0^1\Ln\big(\omega_j\cap (S_A-t\nu)\big)dt
=\int_{\Rn} \Big[\chi_{\omega_j}(y)\int_0^1 \chi_{S_A}(y+t\nu)dt\Big]dy=\lambda\,\Ln(\omega_j),
}
where the last equality follows from the fact that $\chi_{S_A}$ is $1/N$-periodic in the direction $\nu$
and $\chi_{S_A}=1$ on strips each of length $\lambda/N$. Choosing a suitable $t$ through the mean value theorem, the 
conclusion follows setting $v(x):=w_j(x+t\nu)$ on $\omega_j$.
\end{proof}

In our argument we shall iteratively apply Lemma \ref{lemmalaminatebd} above to increase the contribution of the skew-symmetric part
of the gradient without changing the contribution of the symmetric part. 
In order to be sure that no Cantor term in the
distributional derivative is created, we  stop the process after finitely many steps, and introduce
an additional iteration later. To treat the remainder zones where the piecewise affine function
has symmetric gradient, we approximate it with piecewise constant functions.

\begin{lemma}\label{lemmaaffinejump}
 Let $\omega$ be a convex polyhedron, and let $u:\omega\to\R^n$ be affine. For every $\eps>0$
 the following holds:
 \begin{itemize}
  \item[(i)] There is $v\in PA(\omega)$ 
 such that $\|u-v\|_{L^\infty(\omega,\Rn)}\le\eps$, $\nabla v=0$ $\calL^n$-a.e., and 
\be{\label{volume}|Dv|(\omega)\leq n|Du|(\omega).}
 \item[(ii)] There is  $v\in BV\cap C^0(\omega;\Rn)$ such that $\|u-v\|_{L^\infty(\omega,\Rn)}\le\eps$, $\nabla v=0$ $\calL^n$-a.e., and
$Dv=D^cv$ with $|Dv|(\omega)\leq n|Du|(\omega)$.
 \end{itemize}
\end{lemma}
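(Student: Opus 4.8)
The plan is to reduce both statements to a one-dimensional construction, tensorized over the coordinate directions. Write $u(x)=Mx+c$ with $M=Du\in\R^{n\times n}$ the (constant) gradient and $c\in\Rn$, so that $|Du|(\omega)=|M|\,\Ln(\omega)$ for the Euclidean (Frobenius) norm on matrices. Decomposing $u(x)=c+\sum_{i=1}^n(Me_i)\,x_i$, where $Me_i\in\Rn$ is the $i$-th column of $M$, I would replace each scalar variable $x_i$ by a monotone one-dimensional profile $g_i$ and set
\[
v(x):=c+\sum_{i=1}^n (Me_i)\,g_i(x_i).
\]
Here $g_i:[\mu_i,\nu_i]\to\R$, defined on the range $[\mu_i,\nu_i]$ of $x_i$ over $\omega$, is increasing, satisfies $g_i'=0$ $\Ln$-a.e.\ and $|g_i(t)-t|\le\delta$ for a small parameter $\delta=\delta(\eps)$ to be fixed. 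For part (i) I take $g_i$ a step function, for part (ii) a continuous Cantor--Vitali-type function; in both cases such a $g_i$ is obtained by tiling $[\mu_i,\nu_i]$ into finitely many (resp.\ many) short subintervals and placing on each a rescaled copy of an elementary staircase (resp.\ Cantor function) matching the endpoint values.

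With this $v$ the elementary properties are immediate. The $L^\infty$ bound follows from $|v(x)-u(x)|\le\sum_i|Me_i|\,|g_i(x_i)-x_i|\le\delta\sum_i|Me_i|$, which is $\le\eps$ once $\delta$ is small. Since each $g_i'=0$ $\Ln$-a.e., also $\nabla v=0$ $\Ln$-a.e. For (i) the jumps of $g_i$ sit on finitely many hyperplanes $\{x_i=t\}$, so $v$ is constant on each cell of the induced polyhedral subdivision of the convex set $\omega$ and hence $v\in PA(\omega)$. For (ii) each summand $(Me_i)g_i(x_i)$ is continuous, so $v\in C^0\cap L^\infty$ has empty jump set; combined with $\nabla v=0$ this gives $Dv=D^cv$.

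The quantitative heart is the total-variation estimate. Writing $v_i:=(Me_i)g_i(x_i)$, the identity $Dv_i=(Me_i)\otimes Dg_i$ gives $|Dv_i|=|Me_i|\,|Dg_i|$ (with $g_i$ regarded as scalar), so by subadditivity $|Dv|(\omega)\le\sum_i|Me_i|\,|Dg_i|(\omega)$. To compute $|Dg_i|(\omega)$ I would slice along the $x_i$-fibers: as $\omega$ is convex each fiber is an interval $[\alpha(y),\beta(y)]$ with $y\in\pi_i\omega\subset\R^{n-1}$ the projection of $\omega$, and monotonicity of $g_i$ makes the fiber variation equal to $g_i(\beta(y))-g_i(\alpha(y))\le\beta(y)-\alpha(y)+2\delta$. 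Fubini then yields $|Dg_i|(\omega)\le\Ln(\omega)+2\delta\,\calL^{n-1}(\pi_i\omega)$, so that
\[
|Dv|(\omega)\le\Big(\textstyle\sum_i|Me_i|\Big)\Ln(\omega)+C\delta\le\sqrt n\,|M|\,\Ln(\omega)+C\delta,
\]
using $\sum_i|Me_i|\le\sqrt n\,|M|$ (Cauchy--Schwarz). Since $\sqrt n\le n$ — strictly for $n\ge2$, while for $n=1$ the estimate is an exact identity with no error term — choosing $\delta$ small enough gives $|Dv|(\omega)\le n|M|\Ln(\omega)=n|Du|(\omega)$, as required for both parts.

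I expect the main obstacle to be precisely this total-variation computation. The delicate point is that a single profile $g_i$ (a function of $x_i$ only) is shared by all fibers, so the fiber variations are governed by the restriction of the global staircase to each fiber; it is the convexity of $\omega$ (each fiber an interval), together with monotonicity of $g_i$ and the endpoint estimate $|g_i-\mathrm{id}|\le\delta$, that allows Fubini to reproduce $\Ln(\omega)$ up to a controllable error. Everything else — the construction of the elementary step and Cantor profiles, the $L^\infty$ bound, the finiteness of the polyhedral subdivision in (i), and the continuity and purely Cantor character of $Dv$ in (ii) — is routine.
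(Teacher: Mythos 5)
Your proposal is correct and takes essentially the same route as the paper: there, too, one replaces each coordinate by a staircase profile, namely $v^\delta(x)=\sum_i Ae_i\left\lfloor x_i/\delta\right\rfloor\delta+b$ for (i) and the same formula with the Cantor function $\Psi$ interpolating between consecutive steps for (ii), and one closes the total-variation bound with exactly the same slack $\sum_i|Ae_i|\le\sqrt n\,\|A\|\le n\,\|A\|$ by taking $\delta$ small. Your explicit slicing/Fubini justification of the variation estimate is simply a more detailed write-up of the computation the paper leaves implicit.
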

\begin{proof} 
Let $u(x)=A\,x+b$ on $\omega$. For $\delta>0$ we set
    \begin{equation*}
     v^\delta(x) := \sum_i A\,e_i \left\lfloor \frac{x_i}{\delta}\right\rfloor \delta\,+b,
    \end{equation*}
		where $\left\lfloor \alpha\right\rfloor$ denotes the integer part of $\alpha\in\R$.
    It is easy to see that     
$    \|u-v^\delta\|_{L^\infty(\omega,\Rn)}\le \|A\|n\delta $ and
    \begin{equation*}
     \limsup_{\delta\to0} |Dv^\delta|(\omega) \leq \sum_i |A\,e_i| \, \calL^n(\omega) \le \sqrt n 
     \|A\|\,\calL^n(\omega)=\sqrt{n}|Du|(\omega)\,.
    \end{equation*}
Taking $\delta$ sufficiently small the proof is concluded.

To prove the second assertion we let $\Psi\in C^0([0,1];[0,1])$ be the usual Cantor staircase,
with $\Psi(0)=0$, $\Psi(1)=1$, $\Psi'=0$ $\calL^1$-almost evereywhere, and define
\[
 v^\delta(x):=
 \sum_iAe_i\,\delta\Big(\left\lfloor\frac{x_i}{\delta}\right\rfloor
 +\Psi\Big(\frac{x_i}{\delta}-\left\lfloor\frac{x_i}{\delta}\right\rfloor\Big)\Big)+b\,.
\]
\end{proof}

We are now ready to provide the main step in our argument.
\begin{lemma}\label{lemmaconstrcont}
 Let $\Omega$ be a convex polyhedron and let $M>1$. Then there is $u\in PA(\Omega)$ such that 
 $\|u\|_{L^\infty(\Omega,\Rn)}\leq c$, $e(u)=0$ $\calL^n$-a.e., $u=0$ in a neighborhood of $\partial\Omega$, 
 $|Eu|(\Omega)\le 1/M$, and $\|\nabla u\|_{L^1(\Omega,\R^{n\times n})}\ge M$. The constant $c$ depends only on the dimension $n$.
\end{lemma}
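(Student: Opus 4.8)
The plan is to build $u$ by iterating the lamination step of Lemma~\ref{lemmalaminatebd} finitely many times, in a way that pumps mass into the skew-symmetric part of the gradient while leaving the symmetric part small, and then to clean up the remainder with the piecewise-constant approximation of Lemma~\ref{lemmaaffinejump}(i). I start from $u_0\equiv0$ on $\Omega$, so that $Du_0=C_0:=0$ everywhere. The key observation is that $0$ can be written as $\lambda A+(1-\lambda)B$ with $\rank(A-B)=1$ and with $A$, $B$ having a large skew-symmetric part but \emph{vanishing symmetric part}: for instance take $A=t\,e_1\otimes e_2$ and $B=-\tfrac{\lambda}{1-\lambda}\,t\,e_1\otimes e_2$ for a large parameter $t$, so that $A-B$ is rank one, $0=\lambda A+(1-\lambda)B$, and $e(A)=e(B)=0$. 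Applying Lemma~\ref{lemmalaminatebd} with these matrices replaces $u_0$ on (almost all of) $\Omega$ by a function $u_1\in PA(\Omega)$ with $Du_1\in\{A,B\}$ a.e.; since $e(A)=e(B)=0$ we have $e(u_1)=0$ a.e., while $\|\nabla u_1\|_{L^1}$ is of order $t\,\Ln(\Omega)$, which we can make as large as we please.

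The one-step construction, however, does not control $|Eu|(\Omega)$ and the $L^\infty$-norm simultaneously against $\|\nabla u\|_{L^1}$: a single large-amplitude lamination produces a jump set whose skew part is large, and although $e=0$ pointwise, the measure $Eu$ picks up the jump contribution $\tfrac12([u]\otimes\nu+\nu\otimes[u])\calH^{n-1}\LL J_u$, which need not be small. This is where the \emph{iteration} and the amplitude/period scaling of Lemma~\ref{lemmalaminatebd} enter. The idea is to lower the $L^\infty$-amplitude at each stage: by choosing the lamination period $N$ large in Lemma~\ref{lemmalaminatebd} one keeps $\|u_{k}-u_{k-1}\|_{L^\infty}\le\eps_k$ with $\sum_k\eps_k\le c$, while the $L^1$-norm of the gradient accumulates additively. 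The symmetric part of the jump, $\tfrac12([u]\otimes\nu+\nu\otimes[u])$, is controlled by $|[u]|$, and the second conclusion of Lemma~\ref{lemmalaminatebd} bounds $\int_{J_v}|[v]|\,d\calH^{n-1}$ by $\eps+\int_{J_u}|[u]|\,d\calH^{n-1}$; iterating this and summing a convergent series $\sum_k\eps_k$ keeps the total jump mass, hence $|Eu|(\Omega)$, below the prescribed $1/M$. Concretely I would first fix how large the gradient must be, pick $t$ and the number of steps accordingly, then \emph{a posteriori} choose the periods $N_k$ (equivalently the $\eps_k$) small enough that both $\|u\|_{L^\infty}\le c$ and $|Eu|(\Omega)\le1/M$ hold.

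Two technical points require care. First, the boundary condition $u=0$ near $\partial\Omega$: I achieve this by carrying out the whole construction on a slightly smaller concentric polyhedron $\Omega'\Subset\Omega$ and extending by $0$, at the cost of one extra layer of jump on $\partial\Omega'$ whose $\calH^{n-1}$-measure and amplitude are controlled, so it can be absorbed into the $1/M$ budget. Second, I must ensure no Cantor part is created, i.e. that the final object is genuinely in $PA(\Omega)$ with $Eu$ a pure jump measure plus the (small) absolutely continuous part coming from $e=0$; this is precisely why the lamination is stopped after \emph{finitely many} steps rather than passed to a limit, and why Lemma~\ref{lemmaaffinejump}(i) (piecewise constant, no Cantor part) rather than part~(ii) is invoked on the leftover regions where $Du$ still equals an intermediate value $C_k$ with nonzero symmetric part. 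On those remainder zones Lemma~\ref{lemmaaffinejump}(i) replaces the affine piece by a piecewise constant one, making $e=0$ there as well while only adding jump mass bounded by $n\,|Du|$ on that zone, again absorbable into the budget.

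The main obstacle I anticipate is the simultaneous bookkeeping of the three competing quantities $\|u\|_{L^\infty}$, $|Eu|(\Omega)$, and $\|\nabla u\|_{L^1}$. The tension is that enlarging $\|\nabla u\|_{L^1}$ to reach $M$ naturally inflates the skew part of the jumps, and one must verify that this inflation is confined to the \emph{skew} component of $\tfrac12([u]\otimes\nu+\nu\otimes[u])$ and leaves the symmetric component — the only part that enters $Eu$ — small. The algebraic reason this works is that the laminations are built from matrices $A,B$ with $e(A)=e(B)=0$, so the jump vectors $[u]=\pm(A-B)\nu$ are arranged to be \emph{tangential} in the sense that $[u]\otimes\nu$ is (up to the rank-one structure) skew, and the symmetric part of a single jump is an order-one quantity times $|[u]|$ that we separately shrink through the amplitude control. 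Making the constant $c$ in the $L^\infty$ bound genuinely depend only on $n$, and not on $M$, is the quantitative heart of the estimate, and I would extract it from the geometric-series choice of the $\eps_k$ together with the scale-invariant form of the bounds in Lemmas~\ref{lemmalaminatebd} and~\ref{lemmaaffinejump}.
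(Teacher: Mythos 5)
Your construction fails at its very first and most essential step: the claim that you can write $0=\lambda A+(1-\lambda)B$ with $\mathrm{rank}(A-B)=1$ and $e(A)=e(B)=0$. The matrices you propose, $A=t\,e_1\otimes e_2$ and $B=-\tfrac{\lambda}{1-\lambda}\,t\,e_1\otimes e_2$, do \emph{not} have vanishing symmetric part: $e(A)=\tfrac{t}{2}(e_1\otimes e_2+e_2\otimes e_1)\neq 0$, because $e_1\otimes e_2$ is not skew-symmetric. Worse, no choice of matrices can repair this: if $e(A)=e(B)$ and $A\neq B$, then $A-B$ is a nonzero skew-symmetric matrix, and a real skew-symmetric matrix has even rank, so $\mathrm{rank}(A-B)\geq 2$. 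Hence there is no rank-one connection between distinct matrices with the same (in particular, zero) symmetric part, and a lamination all of whose gradients are skew is impossible. This is precisely the obstruction that makes Ornstein-type results nontrivial, and it is why the paper's proof cannot, and does not, proceed the way you suggest. A related error appears in your last paragraph: the symmetric part $\tfrac12([u]\otimes\nu+\nu\otimes[u])$ of a jump never vanishes unless $[u]=0$ (take the trace to get $[u]\cdot\nu=0$, then apply the matrix to $\nu$ to get $[u]=0$), so jumps can never be made to contribute only to the skew part of the strain.

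The paper's proof instead accepts that after each lamination a ``bad'' region with non-skew gradient must survive, and arranges the iteration so that this region shrinks geometrically in volume while its gradient amplitude doubles: the symmetric matrix $A_k$ (entries $2^k$) is split as $\tfrac13 B_k+\tfrac23 C_k$ with $B_k$ skew and $\mathrm{rank}(B_k-C_k)=1$, then $C_k$ is split as $\tfrac34 A_{k+1}+\tfrac14(-B_{k+1})$, so that $\calL^n(\Omega_{k+1})=\tfrac12\calL^n(\Omega_k)$ while $|A_k|\,\calL^n(\Omega_k)$ stays constant, and each round adds the fixed amount $\tfrac23\sqrt2\,\calL^n(\Omega_0)$ to $\|\nabla u_k\|_{L^1}$. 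You did correctly anticipate the final truncation of the residual affine pieces by Lemma~\ref{lemmaaffinejump}(i), but your bookkeeping for it is untenable: the jump mass created there is of order $n|A_k|\,\calL^n(\Omega_k)\sim \calL^n(\Omega_0)$, a fixed constant that cannot be driven below $1/M$ by choosing the lamination parameters $\eps_k$ small. Accordingly, the paper only proves $|Ew_k|(\Omega)\le c$ with $c$ independent of $k$ and $M$, and obtains the statement by the final rescaling $u:=w_k/(cM)$ with $k$ so large that $\|\nabla w_k\|_{L^1(\Omega,\R^{n\times n})}\ge c\,M^2$; this rescaling step, absent from your proposal, is what simultaneously yields $|Eu|(\Omega)\le 1/M$ and $\|\nabla u\|_{L^1(\Omega,\R^{n\times n})}\ge M$, while the $L^\infty$ bound survives because dividing by $cM>1$ only decreases it.
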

\begin{proof}
We define, for $k\in\N$, the matrices $A_k$, $B_k$, $C_k\in\R^{n\times n}$ by
 \begin{equation*}
  A_k:=\begin{pmatrix}
       0 & 2^k\\ 2^k & 0
      \end{pmatrix}\,, \hskip5mm
B_k:=\begin{pmatrix}
       0 & 2^k\\ -2^k & 0
      \end{pmatrix}\,, \hskip5mm
C_k:=\begin{pmatrix}
       0 & 2^k\\ 2^{k+1} & 0
      \end{pmatrix}\,,
 \end{equation*}
with the other entries vanishing if $n>2$.
 We construct inductively a sequence of functions $u_k\in PA(\Omega)$
 and sets $\omega_k\subset\Omega$, with $\Omega_k$ a finite union of convex polyhedra, such that
 $\Omega_k:=\{x\in\Omega:\,Du_k=A_k\}$ for every $k$ and $e(u_k)=0$ 
 $\Ln$-a.e. on $\Omega\setminus\Omega_k$.
 
We start with a convex polyhedron $\Omega_0\subset\subset\Omega$ with $\calL^n(\Omega_0)\le 1$ 
and the function $u_0:= \chi_{\Omega_0}A_0x$.

In order to construct $(u_{k+1}, \Omega_{k+1})$ from $(u_k,\Omega_k)$, we observe that 
\begin{equation*}
 A_k = \frac13 B_k + \frac23 C_k\quad \text{and}\quad \rank (B_k-C_k)=1\,.
\end{equation*}
We define $\hat u_k$ by  Lemma \ref{lemmalaminatebd}, with $\eps=2^{-k}$, $\omega=\Omega_k$,
and $C=A_k$.
We set $\hat\Omega_k:=\{x\in\Omega:\,D\hat u_k(x)=C_k\}$ and note that $\hat\Omega_k\subset\Omega_k$ and 
$\calL^n(\hat\Omega_k)=\frac23\calL^n(\Omega_k)$. Since 
\begin{equation*}
 C_k = \frac34 A_{k+1} + \frac14 (-B_{k+1})\quad \text{and}\quad \rank (A_{k+1}+B_{k+1})=1\,,
\end{equation*}
we can apply  Lemma \ref{lemmalaminatebd} again to $\hat u_k$, with the same $\eps$, $\omega=\hat\Omega_k$, 
and $C=C_k$, to obtain
$u_{k+1}\in PA(\Omega)$ such that,  with 
$\Omega_{k+1}:=\{x\in\Omega:\,Du_{k+1}(x)=A_{k+1}\}\subset\hat\Omega_k$, 
it holds $e(u_{k+1})=0$ on $\Omega\setminus\Omega_{k+1}$, $\calL^n(\Omega_{k+1})=\frac12\calL^n(\Omega_k)=
2^{-(k+1)}\calL^n(\Omega_0)$,
\begin{equation}\label{e:saltouk}
 \int_{\Omega\cap J_{u_{k+1}}} |[u_{k+1}]| d\calH^{n-1} \le 2\cdot 2^{-k} + 
 \int_{\Omega\cap J_{u_{k}}} |[u_{k}]| d\calH^{n-1}, 
\end{equation}
and
\begin{align}\label{e:graduk}
 \int_{\Omega\setminus\Omega_{k+1}} |\nabla u_{k+1}| dx&=
\int_{\Omega\setminus\Omega_{k}} |\nabla u_k| dx
+ \frac13 \calL^n(\Omega_k) \, |B_k| + \frac16\calL^n(\Omega_k) \, |B_{k+1}|\notag\\
&=\int_{\Omega\setminus\Omega_{k}} |\nabla u_k| dx
+ \frac23 \sqrt 2\, \calL^n(\Omega_0).
\end{align}
Therefore, in view of \eqref{e:saltouk} and \eqref{e:graduk}, for all $k$ we conclude
\be{\label{Euk}|Eu_k|(\Omega)\le |A_k|\, \calL^n(\Omega_k)+ \sum_k 2\cdot 2^{-k} +\int_{\partial\Omega_0}|A_0x|d\calH^{n-1}\le c} 
and $\|\nabla u_k\|_{L^1(\Omega,\R^{n\times n})}\to\infty$. 

Let $\{\Omega_k^i\}_{i=1}^N$ be the set of polyhedra which composes $\Omega_k$, and let $v_k^i$ be the 
function provided by Lemma~\ref{lemmaaffinejump}(i) applied to $u_k$ on $\Omega_k^i$ with 
$\eps_i:=\min\Big\{1,1/\big(N\,\calH^{n-1}(\partial\Omega_k^i)\big)\Big\}$. Recall that 
$ \|u_k-v_k^i\|_{L^{\infty}(\Omega^i_k,\Rn)}\leq\eps_i$.

We define $w_k:=u_k\chi_{\Omega\setminus\Omega_k}+\sum_{i=1}^Nv_k^i\chi_{\Omega_k^i}$. 
Note that $w_k\in PA(\Omega)$, $\nabla w_k=\nabla u_k\chi_{\Omega\setminus\Omega_k}$, $e(w_k)=0$ $\calL^{n-1}$-a.e. 
on $\Omega$, and 
\ba{\label{saltovk} 
& \displaystyle\int_{J_{w_k}\cap\overline{\Omega}_k}|[w_k]|d\calH^{n-1}=
\sum_{i=1}^N\int_{J_{v_k^i}\cap\Omega_k^i}|[v_k^i]|d\calH^{n-1}+\sum_{i=1}^N\int_{\partial\Omega_k^i}|[w_k]|d\calH^{n-1}\nonumber\\
&  \displaystyle\stackrel{\eqref{volume}}{\leq} n|Du_k|(\Omega_k)+ 
\sum_{i=1}^N\|u_k-v_k^i\|_{L^{\infty}(\Omega^i_k,\Rn)}\calH^{n-1}(\partial\Omega_k^i)
+\int_{\partial\Omega_k}|[u_k]|d\calH^{n-1}\nonumber\\
&\leq
\displaystyle c\,\calL^n(\Omega_0)+1+\int_{J_{u_k}\cap\Omega}|[u_k]|d\calH^{n-1}\leq c,
}
for some constant $c$ independent from $k$ thanks to \eqref{e:saltouk}.

In conclusion by \eqref{Euk} and \eqref{saltovk}
$$
|Ew_k|(\Omega)\leq
|Eu_k|(\Omega\setminus\overline\Omega_k)+\int_{J_{w_k}\cap\overline\Omega_k}|[w_k]|d\calH^{n-1}\leq c,
$$
for a constant $c$ independent from $k$ (and $M$). 
The function
$w_k/(cM)$ has the stated properties for $k$ sufficiently large.
\end{proof}

We are now ready to prove the main result of the section.
\begin{proof}[Proof of Theorem~\ref{t:purejump}]
Let $\{Q_k\}_{k\in\N}$ be a family of  countably many disjoint cubes contained in $\Omega$. For each of them let
$u_k$ be the function constructed in Lemma~\ref{lemmaconstrcont} above, using $M:=2^k$. We set $u:=u_k$ in $Q_k$,  
$u:=0$ on the rest.  
Note that the sequence $(\sum_{k=1}^ju_k\chi_{Q_k})_j$ converges in the $BD$ norm to $u$ as $|Eu_k|(Q_k)\le 2^{-k}$. 
Therefore, $u\in SBD(\Omega)$, $e(u)=0$ $\calL^n$-a.e.,  $|Eu|(\Omega)=\sum_k |Eu|(Q_k)\le 2$ and
$$\|\nabla u\|_{L^1(\Omega,\R^{n{\times}n})}=\sum_k \|\nabla u_k\|_{L^1(Q_k,\R^{n{\times}n})}\ge \sum_k 2^k=\infty.$$
Finally, $u\in L^\infty(\Omega;\R^n)$ by Lemma~\ref{lemmaconstrcont}, thereby $u\notin GBV(\Omega;\R^n)$.
\end{proof}

A slight modification of the previous construction provides a function $u$ 
in $BD\setminus GBV$ for which $Eu=E^cu$.

\begin{theorem}\label{t:purecantor}
For any nonempty open set $\Omega\subseteq\R^n$ there is $u\in BD(\Omega)\cap L^\infty(\Omega;\Rn)$
such that $Eu=E^cu$ and $\nabla u\notin L^1(\Omega;\Rn{\times}\Rn)$. In particular $u\not\in GBV(\Omega;\Rn)$.
\end{theorem}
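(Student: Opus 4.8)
The plan is to follow the proof of Theorem~\ref{t:purejump} almost verbatim, tiling $\Omega$ by countably many disjoint cubes $Q_k\subset\Omega$ and placing on each a rescaled copy of a single building block with $|Eu|(Q_k)\le 2^{-k}$ and $\|\nabla u\|_{L^1(Q_k)}\ge 2^k$. Summability of the strains gives $u\in BD(\Omega)\cap L^\infty(\Omega;\Rn)$ with $\|\nabla u\|_{L^1}=\sum_k\|\nabla u\|_{L^1(Q_k)}=\infty$. The only new requirement, compared with Theorem~\ref{t:purejump}, is that the building block be \emph{continuous} (so that $J_u$ is $\Hn$-null and the jump part $E^ju$ vanishes) while still having $e(u)=0$ $\Ln$-a.e.; granting this, $Eu$ is a bounded measure with vanishing absolutely continuous and jump parts, whence $Eu=E^cu$, and $u\not\in GBV(\Omega;\Rn)$ exactly as before.

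For the building block I would rerun the iterated lamination of Lemma~\ref{lemmaconstrcont} to manufacture the unbounded part of the gradient: since the $B_k$ are skew, the strips on which $Du=B_k$ feed $\nabla u$ but not $e(u)$, so that $\|\nabla u\|_{L^1}$ can be made as large as wanted while the symmetric strain stays concentrated on the shrinking remainder zones $\Omega_k$, where $Du=A_k$ is symmetric and $\Ln(\Omega_k)=2^{-k}\Ln(\Omega_0)$. On these zones I would now invoke Lemma~\ref{lemmaaffinejump}(ii) in place of (i): it replaces each affine piece by a \emph{continuous} map with $\nabla v=0$ a.e.\ and $Dv=D^cv$, with $|Dv|(\Omega_k)\le n|Du|(\Omega_k)$ staying bounded. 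Thus the symmetric strain of the block is recorded as a genuine Cantor part rather than as jumps, and $e(u)=0$ a.e.\ survives because the Cantor staircase has vanishing approximate gradient.

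The delicate point — and the step I expect to absorb all the work — is continuity. In Lemma~\ref{lemmalaminatebd} the oscillation is inserted only on the laminated region, producing a jump of size $\le\eps_k$ across the interface $\partial\Omega_k$ separating the modified (symmetric) strips from the untouched skew strips; the same happens where the block is made to vanish near $\partial Q_k$. Such interfaces are generically tangentially mismatched, and one checks that the resulting jump can be eliminated neither by a smooth cutoff (which would reopen an absolutely continuous symmetric strain and destroy $e(u)=0$) nor by a naive Cantor transition (whose staircase plateaus carry non-skew absolutely continuous gradient on a set of positive measure). The resolution I would pursue is to realize \emph{every} symmetric ingredient of the construction — the remainder zones and the interface corrections alike — through the Cantor staircase of Lemma~\ref{lemmaaffinejump}(ii), keeping the genuine absolutely continuous gradient skew a.e.\ and pushing all non-skew contributions of $Du$ into the singular part supported on the shrinking sets. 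This is precisely the place where the absolutely continuous symmetric strain of the Ornstein-type laminate of \cite{ContiFaracoMaggi2005} must be traded for a purely Cantor strain without opening up any jump, and it is the heart of the argument.

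Once the block is continuous with $e(u)=0$ a.e., the rest is routine: as in the proof of Theorem~\ref{t:purejump} the tiled function lies in $BD(\Omega)\cap L^\infty(\Omega;\Rn)$, is continuous, satisfies $\nabla u\notin L^1(\Omega;\Rn{\times}\Rn)$, and has $Eu=E^cu$ because its absolutely continuous and jump parts both vanish; boundedness then yields $u\notin GBV(\Omega;\Rn)$.
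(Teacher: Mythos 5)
Your outline --- tiling $\Omega$ by cubes, re-running the lamination of Lemma~\ref{lemmaconstrcont}, and switching from part (i) to part (ii) of Lemma~\ref{lemmaaffinejump} on the remainder zones --- is indeed the skeleton of the paper's proof, and your reduction (continuity plus $e(u)=0$ $\calL^n$-a.e.\ plus $Eu$ bounded gives $Eu=E^cu$) is correct. But there is a genuine gap exactly where you place ``the heart of the argument'': you name the difficulty (the jumps of size $\le\eps_k$ that Lemma~\ref{lemmalaminatebd} leaves along $\partial\Omega_k^i$, $\partial\hat\Omega_k^i$ and near the cube boundaries), you correctly argue that smooth cutoffs and naive Cantor interpolations fail, and then you stop. ``Realize every symmetric ingredient through the Cantor staircase of Lemma~\ref{lemmaaffinejump}(ii)'' is not a construction, and taken literally it cannot work: that lemma only approximates an affine map in $L^\infty$ on a fixed polyhedron, and its output does not agree with the affine map on the boundary of that polyhedron (the staircase $v^\delta$ differs from $u$ at $\calH^{n-1}$-a.e.\ boundary point), so substituting it for the affine pieces still leaves an $L^\infty$-small mismatch across every interface --- which is precisely the problem you set out to solve, not a solution to it.

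What is missing is the paper's actual gluing device: Cantor-type cut-off functions $\psi_{R,\delta}$, tensor products of Cantor staircases, which are continuous, $BV$, satisfy $D\psi_{R,\delta}=D^c\psi_{R,\delta}$ and $|D^c\psi_{R,\delta}|(R)\le\calH^1(\partial R)$, and equal $1$ well inside $R$ and $0$ on $\partial R$. Every interface in the construction is then handled by the interpolation $(1-\psi)f+\psi g$ (formulas \eqref{e:Uk} and $w_k=(1-\psi_k)U_k+\psi_k v_k$ in the paper): by the Leibniz rule the would-be jump is converted into the purely Cantor measure $(g-f)\otimes D^c\psi$, whose mass is at most $\|f-g\|_{L^\infty}\,\calH^1(\partial R)$. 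Making these contributions summable forces a quantitative re-run of the lamination with $\eps_k=2^{-k}\gamma\min\{1,1/\calH^1(\partial\hat\Omega_k^i)\}$, a choice of the collar widths $\delta_k$ that simultaneously preserves the gradient lower bound (the estimate with the term $\tfrac17(|B_k|+|B_{k-1}|)\calL^2(\Omega_{k-1})$) and keeps the total Cantor mass below $c(1+\gamma N_k)$, and the final tuning $\gamma=1/N_{k_*}$ before rescaling. Your own objection about staircase plateaus carrying convex combinations of the two gradients is legitimate and is a real constraint on where these collars may sit and which pair of functions may be interpolated there; precisely because you raise it, the absence of any explicit interpolation scheme and accompanying estimates means the proposal defers, rather than carries out, the only nontrivial step of the proof.
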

\begin{proof}
 The proof is similar to the one of Theorem~\ref{t:purejump}, therefore we only highlight
 the significant changes in the construction. For notational simplicity we focus on the two-dimensional
 situation with $\Omega=(0,1)^2$.
 
 We introduce first  cut-off functions whose gradient only has a Cantor part. 
 Given $\ell>0$ and $\delta\in(0,\ell/2)$, we define 
 $\psi_{\ell,\delta}:[-\ell,\ell]\to[0,1]$ to be
 \[
\psi_{\ell,\delta}(t):=  
\begin{cases}
1 & t\in[-\ell+\delta, \ell-\delta]\,, \cr
\Psi\Big(\frac{\ell-|t|}{\delta}\Big) & \text{otherwise},
\end{cases}
 \]
 where $\Psi:[0,1]\to[0,1]$ is the  Cantor staircase as in the proof of Lemma \ref{lemmaaffinejump}.
For a rectangle $R=[-a,a]{\times}[-b,b]$  we set
\[
\psi_{R,\delta}(x_1,x_2):=\psi_{a,\delta}(x_1)\psi_{b,\delta}(x_2).   
\]
Note that $\psi_{R,\delta}\in BV\cap C^0(R)$ with 
$$
D\psi_{R,\delta}=D^c\psi_{R,\delta} \quad \text{and}\quad 
|D^c\psi_{R,\delta}|(R)\leq \calH^1(\partial R).
$$ 
Moreover, $\psi_{R,\delta}=1$ on 
$[-a+\delta,a-\delta]\times[-b+\delta,b-\delta]$, $\psi_{R,\delta}|_{\partial R}=0$ 
so that its extension to $0$ on $R^c$ provides a function $BV\cap C^0(\R^2)$ without
altering the total variation. 
 
We fix $\gamma>0$.
We perform the same iterative construction as in Lemma \ref{lemmaconstrcont}.
We start with $\Omega_0=(0,1)^2$ and $u_0=A_0x$.
At step $k$ we use Lemma~\ref{lemmalaminatebd} to construct from $u_{k-1}$ and $\Omega_{k-1}$ 
the functions $\hat u_k$ and $u_k$, and the sets $\Omega_k$ and $\hat\Omega_k$. However,
at the $k$-th step we apply Lemma~\ref{lemmalaminatebd} with $\eps_k=2^{-k}\gamma \min\{1,1/\calH^1(\partial\hat\Omega_k^i)\}$, where  $\{\Omega_k^i\}_{i=1}^{N_k}$ and $\{\hat\Omega_k^i\}_{i=1}^{M_k}$ are the  the polyhedra composing
 $\Omega_k$ and $\hat\Omega_k$, respectively. 
This concludes the construction of $u_k$ and $\Omega_k$.
We remark that for each $k$ the $(\Omega_k^i)_i$
are disjoint congruent rectangles, and analogously the  $(\hat\Omega_k^i)_i$.  
 
We now construct a sequence of modified functions $U_k$, where each jump is replaced by a continuous Cantor staircase. To do this,
at each $k$ 
we consider the Cantor-type cut-off functions 
$\psi_{k}:=\sum_i\psi_{\Omega_k^i,\delta_k}$ and $\hat\psi_{k}:=\sum_i\hat\psi_{\Omega_k^i,\delta_k}$, where 
$\delta_k>0$ will be suitably chosen below. We set $U_0:=u_0$ and 
\be{\label{e:Uk}\begin{cases}
\hat U_k:=(1-\psi_{k}) U_{k}+ \psi_{k}\hat u_k& \text{ for $k\geq 0$}\,,\\
U_k:=(1-\hat\psi_{k-1})\hat U_{k-1}+ \hat\psi_{k-1} u_k& \text{ for $k\geq 1$}\,.
\end{cases}
}
Finally, the truncation step at the end is different. We 
let $v_k^i$ be the function provided by Lemma \ref{lemmaaffinejump}(ii) applied to $u_k$ on $\Omega_k^i$ 
with $\eps_k=\min\Big\{1,1/\big(N_k\,\calH^1(\partial\Omega_k^i)\big)\Big\}$. Recall that 
$ \|u_k-v_k^i\|_{L^{\infty}(\Omega^i_k,\Rn)}\leq\eps_k$ and $Dv_k^i=D^cv_k^i$. We extend $v_k^i$ to $0$
on $\Omega\setminus\Omega_k^i$ and set $v_k:=\sum_iv^i_k$.

Let then 
\[
w_k:=(1-\psi_k)\,U_k+\psi_kv_k\,.
\]
By \cite[Example~3.97]{ambrosio}, $w_k\in BV\cap C^0(\Omega_0;\R^2)$, furthermore by construction 
$Dw_k=\nabla w_k\calL^2\res\Omega_0 +D^cw_k$ with $\nabla w_k$ skew-symmetric $\calL^2$-a.e. in
$\Omega_0$, therefore $Ew_k=E^cw_k$. More precisely, the chain-rule formula yields
\begin{equation}\label{dedcwk}
D^cw_k=(1-\psi_k)D^cU_k+\psi_kD^cv_k+(v_k-U_k)\otimes D^c\psi_k.
 \end{equation}
In what follows we shall estimate separately the total variations of the three terms in (\ref{dedcwk}). To this aim
we first note that in view of \eqref{e:Uk} we get 
\bes{
\|u_k-U_k\|_{L^\infty(\Omega,\R^n)}\leq 2\,\sum_{j=0}^k\eps_j\leq \frac{4\gamma}{\calH^1(\partial\hat\Omega_k^i)}
=:\sigma_k.
}
To bound the first term in (\ref{dedcwk}) we notice that for some positive constant $c$ independent from $k$ we have
\[
|D^cU_k|(\Omega_i^k)\stackrel{\eqref{e:Uk}}{\leq}|D^cU_{k-1}|(\Omega_i^k)
+c\,\sigma_{k-1}\calH^{1}(\partial\Omega^i_{k-1})+c\,\sigma_{k}\calH^{1}(\partial\hat\Omega^i_{k}).
\]
By summing over $i$ we get
\bes{
|(1-\psi_k)D^cU_k|(\Omega_0)\leq c\, \gamma\,N_k.
}
From Lemma \ref{lemmaaffinejump}(ii) we infer that
\bes{
|\psi_kD^cv_k|(\Omega_0)\leq |D^cv_k|(\Omega_k)\leq 2\, |Du_k|(\Omega_k)=2\,|Eu_k|(\Omega_k)
\stackrel{\eqref{Euk}}{\leq} c.
}
We turn to the last term of (\ref{dedcwk}). By the triangle inequality
\[
|(v_k-U_k)\otimes D^c\psi_k|(\Omega_k^i)\leq\|v_k-U_k\|_{L^\infty(\Omega_i^k,\R^n)}|D^c\psi_k|(\Omega_i^k)
\leq(\eps_k+\sigma_k)\calH^1(\partial\Omega_k^i).
\]
Therefore
\bes{
|(v_k-U_k)\otimes D^c\psi_k|(\Omega_0)=|(v_k-U_k)\otimes D^c\psi_k|(\Omega_k)\leq c(1+\gamma\,N_k).
}
Collecting terms, we conclude that
\be{\label{e:I3}
|D^cw_k|(\Omega_0)\leq c(1+\gamma\,N_k).
}
Finally, by arguing as in \eqref{e:graduk} taking into account the definitions in \eqref{e:Uk} we infer that
\bes{
\int_{\Omega\setminus\Omega_k}&&\hskip-0,4cm|\nabla w_k|dx=
\int_{\Omega\setminus\Omega_k}|\nabla U_k|dx\geq
\int_{\Omega\setminus\Omega_{k-1}}|\nabla U_{k-1}|dx\nonumber\\
&&+\|B_k\|\calL^2(\{\hat\psi_{k-1}=1\}\setminus\Omega_k)+|B_{k-1}|\calL^2(\{\psi_{k-1}=1\}\setminus\hat\Omega_{k-1})\nonumber\\
&&\geq\int_{\Omega\setminus\Omega_{k-1}}|\nabla U_{k-1}|dx
+\frac 17\big(|B_k|+|B_{k-1}|\big)\calL^2(\Omega_{k-1}),
}
by suitably choosing $\delta_k$ in the definition of $\psi_j$ and $\hat\psi_j$, $0\leq j\leq k$.
Here $B_k$ is the matrix defined in the proof of Lemma \ref{lemmaconstrcont}.
In particular, we conclude that  $\|\nabla w_k\|_{L^1(\Omega,\R^{2\times 2})}\to\infty$.

Choosing $k_*$ large enough to have $\|\nabla w_{k_*}\|_{L^1(\Omega_0,\R^{2\times 2})}> c\,M^2$,
the function $z:=w_{k_*}/(c\,M)$ satisfies $\|\nabla z\|_{L^1(\Omega_0,\R^{2\times 2})}> M$,
and choosing $\gamma=1/N_{k_*}$ the bound \eqref{e:I3} gives
\[
|E^cz|(\Omega_0)\leq \frac 1M\,.
\]
To conclude we remark that by the definitions in \eqref{e:Uk} we have $z|_{\partial\Omega_0}=u_0|_{\partial\Omega_0}$.
Therefore repeating the same construction on a countable family of disjoint closed cubes 
and choosing $M=2^{-j}$ we obtain a function with the properties given in the statement.
\end{proof}

\section{Continuity away from fractures}\label{sbd-continuity}

Let $u\in BD(\Omega)$, with $\Omega$ open. We say that $x\in\Omega$ is a point
of approximate continuity of $u$ if there is $\tilde u(x)$ such that
\begin{equation*}
 \lim_{r\to0} \frac{1}{\Ln(B_r)} \int_{B_r(x)} |u(y)-\tilde u(x)| dy = 0\,.
\end{equation*}
We denote by $S_u$ the set of points $x\in\Omega$ which are not points of approximate continuity. 
Since $u\in L^1(\Omega;\Rn)$, one immediately has $\mathcal L^n(S_u)=0$.

We say that $x\in\Omega$ is a jump point of $u$ if there are two vectors $u_+\ne u_-\in\R^n$
and a normal $\nu\in S^{n-1}$ such that
\begin{equation*}
 \lim_{r\to0} \frac{1}{\Ln(B_r)} \int_{B_r^+(x) } |u(y)-u_+| dy = 
 \lim_{r\to0} \frac{1}{\Ln(B_r)} \int_{B_r^-(x)\ } |u(y)-u_-| dy = 0\,,
\end{equation*}
where $B^\pm_r(x)=B_r(x)\cap \{\pm (y-x)\cdot \nu >0\}$.
Obviously $J_u\subset S_u$. If $u\in BV$ it is well known that $\calH^{n-1}(S_u\setminus J_u)=0$, 
see, for example, \cite{ambrosio}.

We further define $\Theta_u$ as the set of points $x\in\Omega$ such that
\begin{equation*}
 \limsup_{r\to0}\frac{|Eu|(B_r(x))}{r^{n-1}}>0\,.
\end{equation*}
The set $\Theta_u$ is  $(n-1)$-rectifiable, $J_u\subset \Theta_u$, and 
$\calH^{n-1}(\Theta_u\setminus J_u)=0$, see \cite[Prop. 3.5]{amb-cos-dal}.

The main result of the section is the following theorem concerning the size of the set $S_u\setminus J_u$
when the function $u$ belongs to $SBD^p(\Omega)$.
\begin{theorem}\label{SuJu}
If $u\in SBD^p(\Omega)$ for some $p>1$, with $\Omega\subset\R^n$ open, then $\calH^{n-1}(S_u\setminus J_u)=0$.
\end{theorem}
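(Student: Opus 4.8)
The plan is to show that the set $S_u\setminus J_u$ is $\calH^{n-1}$-negligible by combining the structure theorems already recalled in the excerpt with the higher integrability of $e(u)$. The key observation is that we already know $\calH^{n-1}(\Theta_u\setminus J_u)=0$ from \cite[Prop.~3.5]{amb-cos-dal}, where $\Theta_u$ is the set of points with positive upper $(n-1)$-density of $|Eu|$. Since $J_u\subset\Theta_u$, it therefore suffices to prove that $S_u\setminus\Theta_u$ is $\calH^{n-1}$-negligible; equivalently, I would show that $\calH^{n-1}$-almost every point $x$ at which the strain has vanishing $(n-1)$-density, i.e.\ $\lim_{r\to0} |Eu|(B_r(x))/r^{n-1}=0$, is a point of approximate continuity of $u$. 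This reduces the theorem to a \emph{local} statement about blow-ups at points of small strain density.

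First I would fix such a point $x$ with $\limsup_{r\to0}|Eu|(B_r(x))/r^{n-1}=0$ and study the blow-ups $u_{x,r}(y):=u(x+ry)$ (suitably renormalized by subtracting mean values or rigid motions) on the unit ball. The natural tool here is a Poincar\'e--Korn type inequality in $BD$: for a $BD$ function on a ball one can subtract an infinitesimal rigid motion $a_r$ so that the $L^{n/(n-1)}$ norm of $u-a_r$ is controlled by $|Eu|$ on the ball, by the embedding $BD(B_r)\hookrightarrow L^{n/(n-1)}$. I would use the fact that $u\in SBD^p$, so that $|Eu|(B_r(x))$ splits as $\int_{B_r(x)}|e(u)|\,dx + \int_{J_u\cap B_r(x)}|[u]|\,d\calH^{n-1}$, with the first term controlled in $L^p$, $p>1$. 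The $L^p$ integrability of $e(u)$ gives, via H\"older, that $\int_{B_r(x)}|e(u)|\,dx = o(r^{n-1})$ for $\calH^{n-1}$-a.e.\ $x$ (this is a standard density estimate for $L^p$ functions against $\calH^{n-1}$, valid precisely because $p>1$), while the jump contribution is $o(r^{n-1})$ exactly on the complement of $\Theta_u$. Hence both parts of the strain are $o(r^{n-1})$ at $\calH^{n-1}$-a.e.\ point outside $J_u$.

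The heart of the argument is then to upgrade this smallness of the strain density into approximate continuity of $u$. The plan is to show that the rigid motions $a_r$ stabilize as $r\to0$: using the Korn--Poincar\'e estimate on dyadic annuli $B_{2^{-j}}\setminus B_{2^{-j-1}}$ and summing, the smallness $|Eu|(B_r)=o(r^{n-1})$ forces the skew-symmetric parts and the translation parts of the $a_r$ to form a Cauchy sequence, so that $a_r\to a_0$ for some fixed infinitesimal rigid motion, and in fact the rotational part must vanish so that $a_0$ is a constant vector. This constant is the candidate approximate limit $\tilde u(x)$. Controlling the $L^1$ mean oscillation $\frac{1}{\Ln(B_r)}\int_{B_r(x)}|u-\tilde u(x)|\,dy$ by the telescoping sum of the annular Korn--Poincar\'e estimates then yields that this oscillation tends to zero, which is precisely approximate continuity at $x$.

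I expect the main obstacle to be the quantitative control of the rigid motions across scales. Unlike the $BV$ case, where one works directly with the full gradient, in $BD$ the Korn--Poincar\'e inequality only bounds $u$ minus a \emph{rigid motion} (not minus a constant), and the rotational part $W_r$ of this rigid motion can a priori grow as $r\to0$; ruling this out, and showing that the accumulated rotations remain summable so that $u$ genuinely has a pointwise approximate limit, is the delicate step. This is exactly where the strict inequality $p>1$ is essential: with $p=1$ the density estimate $\int_{B_r}|e(u)|\,dx=o(r^{n-1})$ can fail on a set of positive $\calH^{n-1}$ measure, which is consistent with the counterexamples of Theorems~\ref{t:purejump} and~\ref{t:purecantor} where approximate continuity $\calH^{n-1}$-a.e.\ genuinely breaks down.
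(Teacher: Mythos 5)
Your overall skeleton (reduce to points off $\Theta_u$, approximate $u$ by rigid motions at dyadic scales, telescope to produce the approximate limit) matches the paper's strategy, but the step you yourself call the heart of the argument contains a genuine gap. You claim that the smallness $|Eu|(B_r(x))=o(r^{n-1})$ ``forces'' the rigid motions to form a Cauchy sequence. It does not: the Korn--Poincar\'e estimate at consecutive dyadic scales $r_k=2^{-k}$ gives
\begin{equation*}
\|a_{r_k}-a_{r_{k+1}}\|_{L^\infty(B_{r_{k+1}}(x),\Rn)}\le \frac{c}{r_k^{n-1}}\,|Eu|(B_{r_k}(x))=:c\,\epsilon_k(x)\,,
\end{equation*}
and convergence of the telescoping sum requires the \emph{summability} $\sum_k\epsilon_k(x)<\infty$, not merely $\epsilon_k(x)\to 0$ (think of $\epsilon_k\sim 1/k$). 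You also misplace where $p>1$ enters: the pointwise density estimate $\int_{B_r(x)}|e(u)|\,dy=o(r^{n-1})$ for $\calH^{n-1}$-a.e.\ $x$ holds for \emph{any} integrand in $L^1$ (split it into a bounded part and a part of small $L^1$ norm and apply the standard density estimate \cite[Th. 2.56]{ambrosio}), so it cannot be ``valid precisely because $p>1$''. What genuinely needs $p>1$ is the $\calH^{n-1}$-a.e.\ dyadic summability $\sum_k r_k^{1-n}\int_{B_{r_k}(x)}|e(u)|\,dy<\infty$; this is exactly the paper's Lemma~\ref{lemmaFfLp}, proved by a Vitali covering argument with logarithmic weights, and the remark following it exhibits $f=\chi_{B_{1/2}}/(x_n\ln^2(1/x_n))\in L^1$ whose density tends to $0$ at every point of the hyperplane $\{x_n=0\}$ while the dyadic sum diverges on that whole hyperplane. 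So the implication you rely on (each-scale smallness $\Rightarrow$ Cauchy) is false, not merely unproven.

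There is a second, related gap: you invoke the classical $BD$ Korn--Poincar\'e inequality, whose right-hand side is the full $|Eu|(B_r)$ and hence includes the jump contribution $\int_{J_u\cap B_r}|[u]|\,d\calH^{n-1}$. Even granting summability for the absolutely continuous part, your telescoping would then also require $\sum_k r_k^{1-n}\int_{J_u\cap B_{r_k}(x)}|[u]|\,d\calH^{n-1}<\infty$ at $\calH^{n-1}$-a.e.\ $x\notin J_u$; this does not follow from $x\notin\Theta_u$ (which only gives decay of each single term), and you offer no argument for it. The paper avoids this issue entirely by replacing the classical inequality with the $SBD$ Korn--Poincar\'e inequality of \cite{ChambolleContiFrancfort} (Lemma~\ref{lemmapoincholes}): under the hypothesis $\calH^{n-1}(J_u\cap B_r(x))\le\eta r^{n-1}$, which holds for all small $r$ at $\calH^{n-1}$-a.e.\ $x\notin J_u$ because $\calH^{n-1}(J_u)<\infty$, the affine approximation error is controlled by $\int_{B_r}|e(u)|\,dy$ \emph{alone}, at the price of an exceptional set $\omega$ of small volume, which is then handled by the elementary Lemma~\ref{affine}. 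With that substitution only the $L^p$ part of the strain enters the telescoping, and Lemma~\ref{lemmaFfLp} closes the argument; these two repairs are exactly what separate your outline from a complete proof.
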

The proof uses Lemma \ref{lemmapoincholes} and  Lemma \ref{lemmaFfLp}  below. The first 
is a consequence of the Korn-Poincar\'e inequality for $SBD^p$ functions proven in 
\cite{ChambolleContiFrancfort} and
states that a function
$u\in SBD$ with a small jump set can be approximated by an affine function away from a special set,
with an error depending only on $e(u)$.


\begin{lemma}\label{lemmapoincholes}
 There is $\eta>0$, depending only on $n$, such that the following holds.
 For any $r>0$ and $u\in SBD(B_r)$ such that $\calH^{n-1}(J_u)<\eta r^{n-1}$ there are a set 
 $\omega\subset B_r$ and an affine function $\varphi:\R^n\to\R^n$ such that 
 \begin{equation*}
  \calL^n(\omega) \le \frac1{2^{n+3}} \calL^n(B_r)
 \end{equation*}
and
 \begin{equation*}
  \int_{B_r\setminus \omega} |u-\varphi| dx\le c\,  r\, \int_{B_r} |e(u)|dx\,.
 \end{equation*}
\end{lemma}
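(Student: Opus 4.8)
The plan is to obtain the statement as a direct, essentially formal, consequence of the Korn--Poincar\'e inequality of Chambolle, Conti and Francfort \cite{ChambolleContiFrancfort}. In its scale-invariant form that result provides, for every $u\in SBD(B_r)$, a set of finite perimeter $\omega\subset B_r$ and a rigid motion (in particular an affine map) $\varphi(x)=Wx+b$, $W$ skew-symmetric, such that
\[
\calL^n(\omega)\le c_1\,\big(\calH^{n-1}(J_u)\big)^{\frac{n}{n-1}}
\qquad\text{and}\qquad
\Big(\int_{B_r\setminus\omega}|u-\varphi|^{\frac{n}{n-1}}\,dx\Big)^{\frac{n-1}{n}}\le c_2\int_{B_r}|e(u)|\,dx,
\]
with $c_1,c_2$ depending only on $n$. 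If one prefers to quote the inequality only on the unit ball, I would first rescale via $u_r(y):=r^{-1}u(ry)$ on $B_1$, which transforms $e(u)$, $J_u$ and the two integrals with exactly the homogeneities that leave both displayed bounds invariant. The desired $L^1$-estimate then follows from the second inequality by H\"older with conjugate exponents $n$ and $n/(n-1)$:
\[
\int_{B_r\setminus\omega}|u-\varphi|\,dx\le \calL^n(B_r)^{\frac1n}\Big(\int_{B_r\setminus\omega}|u-\varphi|^{\frac{n}{n-1}}\,dx\Big)^{\frac{n-1}{n}}\le c\, r\int_{B_r}|e(u)|\,dx,
\]
since $\calL^n(B_r)^{1/n}=\calL^n(B_1)^{1/n}\,r$, so that $c$ depends only on $n$.

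It then remains only to convert the volume bound on $\omega$ into the quantitative bound $\calL^n(\omega)\le 2^{-(n+3)}\calL^n(B_r)$, and this is where the smallness hypothesis on the jump set enters. Inserting $\calH^{n-1}(J_u)<\eta\,r^{n-1}$ into the first inequality gives
\[
\calL^n(\omega)\le c_1\,\big(\eta\,r^{n-1}\big)^{\frac{n}{n-1}}=c_1\,\eta^{\frac{n}{n-1}}\,r^n,
\]
while $\calL^n(B_r)=\omega_n r^n$ with $\omega_n:=\calL^n(B_1)$. Hence it suffices to choose $\eta$ so small that $c_1\,\eta^{\frac{n}{n-1}}\le 2^{-(n+3)}\omega_n$; since $c_1$ and $\omega_n$ depend only on $n$, this fixes $\eta=\eta(n)>0$, the factor $r^n$ cancels, and the stated measure bound holds uniformly in $r$.

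There is no genuine analytic obstacle beyond correctly importing \cite{ChambolleContiFrancfort}, since the content of the lemma is precisely their estimate repackaged. The only points demanding care are (i) tracking the scaling so that all constants and $\eta$ can be taken independent of $r$, and (ii) observing that the approximating map produced by the cited theorem is affine (indeed a rigid motion), so that the weaker requirement ``$\varphi$ affine'' in the statement is automatically met. I note also that the full $SBD^p$ hypothesis is not needed here: both the exceptional-set bound and the error estimate are expressed purely through $e(u)\in L^1$ and $\calH^{n-1}(J_u)$, so the lemma holds at the level of $SBD$.
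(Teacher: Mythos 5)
Your proposal is correct and is essentially identical to the paper's proof, which consists of the single sentence ``This follows from Theorem 1 of \cite{ChambolleContiFrancfort}''; you have simply made explicit the routine steps the paper leaves implicit (scaling, H\"older's inequality to pass from the $L^{n/(n-1)}$-estimate to the $L^1$-estimate, and the choice of $\eta$ small enough that the volume bound on $\omega$ becomes $2^{-(n+3)}\calL^n(B_r)$). The only caveat is bibliographic rather than mathematical: the precise form of the exceptional-set bound in the cited theorem (linear versus power $n/(n-1)$ in $\calH^{n-1}(J_u)$, cube versus ball) should be checked against the source, but either variant yields the lemma by exactly your argument.
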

\begin{proof}
 This follows from Theorem 1 of \cite{ChambolleContiFrancfort}.
\end{proof}
 The next lemma is used in the proof of Lemma \ref{lemmaFfLp} below. It shows that it is possible to control the
$L^\infty$-norm of an affine function through its $L^1$-norm out of a special set. 
\begin{lemma}\label{affine}
Let $\omega\subset B:=B_r(y)$ satisfy
\be{\label{omega}\calL^n(\omega)\leq\frac14 \calL^n(B)}
and let $\varphi:\R^n\to\R^n$ be an affine function. Then
\be{\label{affineest}\calL^n(B)\|\varphi\|_{L^\infty(B,\Rn)}\leq c\|\varphi\|_{L^1(B\setminus\omega,\Rn)},}
where the constant $c$ depends only on the dimension $n$.
\end{lemma}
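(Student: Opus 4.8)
The plan is to reduce to the unit ball by scaling and then argue by compactness. Given $B=B_r(y)$, set $\tilde\varphi(z):=\varphi(y+rz)$ and $\tilde\omega:=(\omega-y)/r\subset B_1(0)$, so that $\calL^n(\tilde\omega)\le\frac14\calL^n(B_1)$, $\|\tilde\varphi\|_{L^\infty(B_1)}=\|\varphi\|_{L^\infty(B)}$, and, by the change of variables $x=y+rz$, $\|\tilde\varphi\|_{L^1(B_1\setminus\tilde\omega)}=r^{-n}\|\varphi\|_{L^1(B\setminus\omega)}$. Since $\calL^n(B)=\calL^n(B_1)\,r^n$, multiplying through by $\calL^n(B)$ shows that \eqref{affineest} follows (with $c=\calL^n(B_1)\,c_*$) from the scale-invariant statement: there is $c_*=c_*(n)$ such that $\|\psi\|_{L^\infty(B_1)}\le c_*\|\psi\|_{L^1(B_1\setminus\sigma)}$ for every affine $\psi:\R^n\to\R^n$ and every $\sigma\subset B_1$ with $\calL^n(\sigma)\le\frac14\calL^n(B_1)$. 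So it suffices to prove the latter.

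I would prove this by contradiction. If it fails there are affine maps $\varphi_k$ and sets $\omega_k\subset B_1$ with $\calL^n(\omega_k)\le\frac14\calL^n(B_1)$ and $\|\varphi_k\|_{L^\infty(B_1)}>k\|\varphi_k\|_{L^1(B_1\setminus\omega_k)}$. Dividing by $\|\varphi_k\|_{L^\infty(B_1)}$ I may assume $\|\varphi_k\|_{L^\infty(B_1)}=1$, so that $\|\varphi_k\|_{L^1(B_1\setminus\omega_k)}<1/k\to0$. The map $\psi\mapsto\|\psi\|_{L^\infty(B_1)}$ is a norm on the finite-dimensional space of affine functions $\R^n\to\R^n$, so its unit sphere is compact; passing to a subsequence, $\varphi_k\to\varphi$ uniformly on $\overline{B_1}$ with $\|\varphi\|_{L^\infty(B_1)}=1$, in particular $\varphi\not\equiv0$.

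From the uniform convergence, $\int_{B_1\setminus\omega_k}|\varphi|\,dx\le\int_{B_1\setminus\omega_k}|\varphi_k|\,dx+\calL^n(B_1)\|\varphi-\varphi_k\|_{L^\infty(B_1)}\to0$. On the other hand, the zero set of the nonzero affine map $\varphi$ is either empty or contained in a proper affine subspace, hence $\calL^n$-negligible, so $|\varphi|>0$ $\calL^n$-a.e.\ on $B_1$. Therefore $\calL^n(\{|\varphi|\ge\delta\}\cap B_1)\to\calL^n(B_1)$ as $\delta\to0^+$, and I may fix $\delta>0$ with $\calL^n(\{|\varphi|\ge\delta\}\cap B_1)>\frac12\calL^n(B_1)$. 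Using $\calL^n(\omega_k)\le\frac14\calL^n(B_1)$ we get
\[
\int_{B_1\setminus\omega_k}|\varphi|\,dx\ge\delta\,\calL^n\big((\{|\varphi|\ge\delta\}\cap B_1)\setminus\omega_k\big)\ge\delta\Big(\tfrac12-\tfrac14\Big)\calL^n(B_1)=\tfrac{\delta}{4}\calL^n(B_1)>0,
\]
a bound independent of $k$, contradicting the previous limit. This proves the reduced statement and hence \eqref{affineest}.

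The only genuine obstacle is the non-concentration phenomenon hidden in the estimate: the bad set $\omega$ could a priori be chosen to capture exactly the region where $|\varphi|$ is largest, so a crude bound $\int_\omega|\varphi|\le\|\varphi\|_{L^\infty}\calL^n(\omega)$ is too lossy to conclude directly. The compactness argument sidesteps this, because the limiting $\varphi$ is a fixed nonzero affine map whose super-level sets exhaust $B_1$ up to a null set, so no set of measure $\le\frac14\calL^n(B_1)$ can destroy a definite fraction of the $L^1$-mass. A direct quantitative proof would instead require an explicit Remez-type inequality of the form $\int_\omega|\varphi|\le\Psi\big(\calL^n(\omega)/\calL^n(B_1)\big)\int_{B_1}|\varphi|$ with $\Psi(1/4)<1$ depending only on $n$; this is true but not shorter to establish.
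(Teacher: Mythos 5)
Your proof is correct, and it takes a genuinely different route from the paper's. Both arguments begin with the same scaling reduction to $B_1$, but the paper then proceeds directly and quantitatively: for a small translation $\delta e_i$ in each coordinate direction, the overlap $E_i:=(B\setminus\omega)\cap\bigl((B\setminus\omega)+\delta e_i\bigr)$ still has measure at least $\frac14\calL^n(B)$, and integrating the identity $\varphi(x)-\varphi(x-\delta e_i)=\delta A e_i$ (for $\varphi(x)=Ax+b$) over $E_i$ gives $|A|\le \frac{8\sqrt n}{\delta\,\calL^n(B)}\|\varphi\|_{L^1(B\setminus\omega,\Rn)}$; the constant part $b$ is then bounded using this estimate. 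This yields an explicit constant $c(n)$. Your compactness argument avoids all bookkeeping: normalize a putative sequence of counterexamples, use finite-dimensionality of the space of affine maps to extract a uniform limit $\varphi\not\equiv 0$, and contradict the vanishing of $\int_{B_1\setminus\omega_k}|\varphi|\,dx$ by noting that the zero set of a nonzero affine map is $\calL^n$-null, so no set of measure at most $\frac14\calL^n(B_1)$ can remove a fixed fraction of its $L^1$ mass. The trade-off: your constant is not computable, since it arises by contradiction, while the paper's is explicit; on the other hand, your proof generalizes immediately to any fraction $\calL^n(\omega)\le\theta\calL^n(B)$ with $\theta<1$, and verbatim to any finite-dimensional class of functions whose nonzero elements have $\calL^n$-null zero sets (polynomials of fixed degree, i.e.\ the qualitative Remez-type statement you allude to), whereas the translation identity is tailored to affine maps and requires $2\calL^n(\omega)<\calL^n\bigl(B\cap(B+\delta e_i)\bigr)$, hence $\theta<\frac12$. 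One implicit point in your argument worth making explicit: the counterexamples $\varphi_k$ are automatically nonzero (otherwise the strict inequality defining them would fail), which is what licenses the normalization $\|\varphi_k\|_{L^\infty(B_1)}=1$.
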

\begin{proof}
By a scaling argument it is sufficient to prove the statement when $B=B_1$. First note that for sufficiently small $\delta>0$
one has for every $i=1,\dots,n$
$$\calL^n(B\cap (B+\delta e_i))\geq \frac34 \calL^n(B).$$
Hence the set 
$E_i:=(B\setminus\omega)\cap ((B\setminus\omega)+\delta e_i)$
satisfies 
\be{\label{Ei}\calL^n(E_i)\geq\frac34\calL^n(B)-2\calL^n(\omega)\geq \frac14 \calL^n(B).}
Estimating the translation for an affine map $\varphi(x)=Ax+b$ one gets
$$\delta |Ae_i|\calL^n(E_i)= \int_{E_i}|\varphi(x)-\varphi(x-\delta e_i)|dx\leq 2\int_{B\setminus\omega}|\varphi|dx$$
and therefore by (\ref{Ei})
\be{\label{A}|A|\leq \frac{8\sqrt{n}}{\delta\calL^n(B)}\|\varphi\|_{L^1(B\setminus\omega,\Rn)}.}
The constant $b$ can be estimated using (\ref{A}) and (\ref{omega})
\ba{\calL^n(B\setminus\omega)b&\leq&\|\varphi-\varphi(0)\|_{L^1(B\setminus\omega,\Rn)}+\|\varphi\|_{L^1(B\setminus\omega,\Rn)}\nonumber\\
&\leq&\frac{\calL^n(B\setminus\omega)}{\calL^n(B)}\Big(\frac{8\sqrt{n}}{\delta}+\frac43\Big)\|\varphi\|_{L^1(B\setminus\omega,\Rn)}.\label{b}}
Formula (\ref{affineest}) follows from (\ref{A}) and (\ref{b}).
\end{proof}

The following lemma refines the classical estimates for the $(n-1)$-dimensional density in a particular case.

\begin{lemma}\label{lemmaFfLp}
 Let $f\in L^p(\R^n)$, for some $p>1$, and
 \begin{equation*}
  F := \left\{x\in \R^n: \sum_{k\in\N} \frac{1}{r_k^{n-1}} \int_{B_{r_k}(x)} |f|\, dy =\infty\right\}
 \end{equation*}
 where $r_k=2^{-k}$.
Then $\calH^{n-1}(F)=0$. 
If additionally $p>n$ then $F=\emptyset$.
 \end{lemma}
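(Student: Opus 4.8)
The plan is to bound the Hausdorff measure of $F$ by a Vitali-type covering argument combined with the summability of the dyadic Riesz-potential tails, exploiting $f\in L^p$ with $p>1$. First I would fix $\delta>0$ and estimate, for each $x\in F$, the size of the individual terms $r_k^{-(n-1)}\int_{B_{r_k}(x)}|f|\,dy$. By H\"older's inequality with exponents $p$ and $p'=p/(p-1)$,
\begin{equation*}
 \frac{1}{r_k^{n-1}}\int_{B_{r_k}(x)}|f|\,dy
 \le \frac{1}{r_k^{n-1}}\Big(\int_{B_{r_k}(x)}|f|^p\,dy\Big)^{1/p}\big(\calL^n(B_{r_k})\big)^{1/p'}
 = c\,r_k^{n/p'-(n-1)}\Big(\int_{B_{r_k}(x)}|f|^p\,dy\Big)^{1/p}.
\end{equation*}
Since $p>1$ gives $n/p'-(n-1)=1-n/p$, the geometric factor is $r_k^{1-n/p}$; the key point is that membership in $F$ forces infinitely many scales to carry a nontrivial share of the local $L^p$-mass of $f$.

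The main step is a covering estimate. I would show that $F$ is contained, for every $m$, in the set of points where already the tail $\sum_{k\ge m}r_k^{-(n-1)}\int_{B_{r_k}(x)}|f|\,dy$ is large, and then cover $F$ by balls $B_{r_k}(x)$ whose radii tend to $0$. Using a weak-type $(1,1)$ / Chebyshev bound on the maximal-type quantity together with the $L^p$ integrability, one controls $\sum (\text{radius})^{n-1}$ of the covering balls by a quantity that tends to $0$. Concretely, I expect to run the argument through the level sets $F_\lambda:=\{x:\sum_k r_k^{-(n-1)}\int_{B_{r_k}(x)}|f|>\lambda\}$ and estimate $\calH^{n-1}_\infty(F_\lambda)$, or equivalently to integrate the premeasure $\sum_k r_k^{n-1}$ over a Besicovitch subcover and let the smallest radius in the cover go to zero. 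The outcome is $\calH^{n-1}(F)\le \calH^{n-1}(F_\lambda)=0$ for each $\lambda$, hence $\calH^{n-1}(F)=0$, because $F\subset F_\lambda$ for all $\lambda$.

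For the final claim, if $p>n$ then $1-n/p>0$, so the geometric factor $r_k^{1-n/p}=2^{-k(1-n/p)}$ is summable in $k$. Moreover H\"older on a \emph{fixed} ball $B_1(x)$ bounds $\big(\int_{B_{r_k}(x)}|f|^p\big)^{1/p}$ by $\|f\|_{L^p(\R^n)}$ uniformly, so each term is dominated by $c\,\|f\|_{L^p(\R^n)}\,2^{-k(1-n/p)}$ and the series converges for every $x$. Thus no point can lie in $F$, giving $F=\emptyset$.

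The hard part will be the covering/premeasure estimate in the middle paragraph: one must convert the pointwise divergence defining $F$ into a quantitative bound on $\sum(\text{radius})^{n-1}$ of a Vitali--Besicovitch cover that vanishes as the radii shrink, and this requires carefully absorbing the extra power $r_k^{1-n/p}$ gained from the $p>1$ H\"older step rather than just $p=1$, which is exactly what makes the $\calH^{n-1}$-null conclusion possible.
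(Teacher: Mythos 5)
Your overall skeleton --- pointwise H\"older with $p>1$, a Vitali-type covering, and a Hausdorff-content bound that is driven to zero --- is the same as the paper's, and your argument for the case $p>n$ is correct and identical to the paper's. But the middle step, which you yourself flag as the hard part, has a genuine gap and contains a false claim. The gap: a covering argument needs, for each $x\in F$, a \emph{specific} radius $r_{k(x)}$ together with a \emph{quantitative} lower bound on $\int_{B_{r_{k(x)}}(x)}|f|\,dy$, and divergence of the series does not by itself produce one, since all individual terms may be small (of order $1/k$, say). The paper's key device is a comparison with summable weights: if one had $\int_{B_{r_k}(x)}|f|\,dy < r_k^{n-1}/k^2$ for every $k$ with $r_k<\delta$, the series would converge; hence every $x\in F$ admits $k(x)$ with $r_{k(x)}<\delta$ and $\int_{B_{r_{k(x)}}(x)}|f|\,dy \ge r_{k(x)}^{n-1}/k(x)^2$. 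This is what makes Vitali applicable, and it is also what creates the factors $k_j^2$ (which become $k_j^{2p'}$ after H\"older for sums over the disjoint balls) that must then be absorbed using $r^{1/2}|\log_2 r|^{2p'}\le 1$ for $r$ small --- a step entirely absent from your sketch. Your proposed substitute, a ``weak-type $(1,1)$/Chebyshev bound on the maximal-type quantity'', cannot fill this gap: the quantity defining $F$ is a \emph{series}, not a supremum over scales (the sup can be finite while the series diverges), and no weak-type $(1,1)$ bound with respect to $\calH^{n-1}$-content can hold for this series operator, since otherwise the lemma would be true for $p=1$, contradicting the counterexample $f=\chi_{B_{1/2}}/(x_n\ln^2(1/x_n))$ in the paper's remark. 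A weak-type bound from $L^p$ does hold, but proving it is exactly the content of the lemma, so invoking it is circular.

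The false claim: you assert $\calH^{n-1}(F)\le\calH^{n-1}(F_\lambda)=0$ for each fixed $\lambda$, where $F_\lambda$ is the superlevel set of the series. This fails whenever $f\not\equiv 0$: for $f=c\,\chi_{B_1}$ and any $x\in B_{1/2}$ one has $\sum_k r_k^{-(n-1)}\int_{B_{r_k}(x)}|f|\,dy = c\,\omega_n\sum_k r_k>0$, so $F_\lambda$ contains an open set for small $\lambda$ and thus has infinite $\calH^{n-1}$-measure. What a correct argument yields is a bound on Hausdorff \emph{content} that vanishes along a parameter: either $\calH^{n-1}_{10\delta}(F)\le c\,\|f\|_{L^p(\R^n)}^p\,\delta^{p/(2p')}$ with $\delta\to0$ (the paper's route, where all covering balls have radius less than $\delta$), or, in your level-set variant, $\calH^{n-1}_\infty(F_\lambda)\le c\,(\|f\|_{L^p(\R^n)}/\lambda)^p$ with $\lambda\to\infty$. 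In both cases one concludes by monotonicity of the content, not from vanishing of $\calH^{n-1}(F_\lambda)$ itself; and in both cases one must first prove the quantitative covering estimate via the weighted selection, H\"older, and absorption steps described above, taking finite subsums of $\sum_j\rho_j^{n-1}$ so that the self-improving inequality can legitimately be divided through.
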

\begin{remark}  To see that $p>1$ is required one can consider 
 $f(x):=\chi_{B_{1/2}}(x)/(x_n \ln^2(1/x_n))$.
\end{remark}
\begin{proof}
We start from the second statement.
From $\|f\|_{L^1(B_r)}\le r^{n/p'} \|f\|_{L^p(\R^n)}$, 
where $1/p'=1-1/p$,
one deduces, for any $x\in\R^n$, 
\begin{equation*}
  \sum_k \frac{1}{r_k^{n-1}} \int_{B_{r_k}(x)} |f| dy 
  \le \sum_k \frac{r_k^{n/p'}}{r_k^{n-1}} \|f\|_{L^p(\R^n)}
  =  \|f\|_{L^p(\R^n)}\sum_k 2^{k(n-p)/p}  \,.
\end{equation*}
If $p>n$ the last series converges and therefore $x\not\in F$.

\def\rj{r_{(j)}}
\def\rj{\rho_j}
We now turn to the first statement and assume $p\in(1,n]$. By Lebesgue's differentiation theorem one 
has $\Ln(F)=0$.

Fix $\delta>0$. For every $x\in F$ there is $k(x)\in\N$ such that $r_{k(x)}<\delta$ and
\begin{equation}\label{frho}
  \int_{B_{r_{k(x)}}(x)} |f| dy\ge \frac{r_{k(x)}^{n-1}}{k(x)^2},
\end{equation}
otherwise the series would converge.
By Vitali's $5r$-covering lemma, there is a family $G=\{{B_{\rj}(x_j)}: j\in\N\}$ of disjoint balls,
with $\rj=r_{k(x_j)}$ and $k_j=k(x_j)$, 
such that $F\subset \bigcup_j B_{5\rj}(x_j)$. Then
by the definition of the Hausdorff measure 
\begin{equation}\label{Hdelta}
 \calH^{n-1}_{10\delta}(F)\le c \sum_j (5\rj)^{n-1}. 
\end{equation}
By \eqref{frho} and by H\"older's inequality, we obtain
\begin{equation*}
 \rj^{n-1}\le 
 k_j^2 \int_{B_{\rho_j}(x_j)} |f| dy
 \leq c\|f\|_{L^p(B_{\rho_j}(x_j))}k_j^2\rj^{n/p'}\,.
 \end{equation*}
For all $M\in\N$, using H\"older's inequality and the fact that 
the balls are disjoint one obtains
\begin{alignat*}1
S_M:=\sum_{j=0}^M  \rj^{n-1} \le &
 c\,\|f\|_{L^p(\R^n)}\left( \sum_{j=0}^M  k_j^{2p'}\rj^n \right)^{1/p'} \,.
\end{alignat*}
For any $p\in(1,\infty)$ there is $\delta_p$ such that
$r^{1/2}|\log_2 r|^{2p'}\le 1$ for all $r\in (0,\delta_p)$. Therefore for all $\delta\le \delta_p$ 
one obtains
\begin{alignat}1\label{sumdelta}
S_M\le c\, \|f\|_{L^p(\R^n)}\,\delta^{1/(2p')} S_M^{1/p'}.
\end{alignat}
By collecting \eqref{Hdelta} and \eqref{sumdelta} we conclude
\begin{equation*}
 \calH^{n-1}_{10\delta}(F)\le c\,\|f\|_{L^p(\R^n)}^p\,\delta^{p/(2p')} ,
\end{equation*}
therefore with  $\delta\to0$ we obtain $\calH^{n-1}(F)=0$.
\end{proof}

\begin{proof}[Proof of Theorem \ref{SuJu}]
Let $x\in\Omega\setminus\Theta_u$, by \cite[Corollary 6.7]{amb-cos-dal}
for any $r>0$ there is $a_{x,r}\in\R^n$ such that
\begin{equation}\label{eqpoincareconasd}
  \limsup_{r\to0} \frac{1}{r^n} \int_{B_r(x)} |u(y)-a_{x,r}| dy=0\,.
\end{equation}
In the rest of the proof we will show that $\lim_{r\to0} a_{x,r}$ exists for $\calH^{n-1}$-almost every $x\in\Omega\setminus \Theta_u$.
In view of \eqref{eqpoincareconasd} it is actually sufficient to prove the claim for the subsequence $r_k=2^{-k}$.

Let $\eta$ be as in Lemma \ref{lemmapoincholes} and let
 \begin{equation*}
\Omega_k:=\{x\in \Omega: B_{r_k}(x)\subset\Omega \text{ and } \calH^{n-1}(J_u\cap B_{r_k}(x))\le \eta r_k^{n-1}\}\,.      
     \end{equation*}

We also introduce the set		
\begin{equation*}
  F := \left\{x\in \R^n: \sum_{k\in\N} \frac{1}{r_k^{n-1}} \int_{B_{r_k}(x)} |e(u)|\, dy =\infty\right\}
 \end{equation*}
and we fix $k\in\N$ and $x\in \bigcap_{h\ge k} \Omega_h\setminus (\Theta_u\cup F)$. 
By  Lemma \ref{lemmapoincholes} for every $h\ge k$ there are an affine map $\varphi_h:\R^n\to\R^n$ and a set $\omega_h$ such that
 $\calL^n(\omega_h)<2^{-n-3} \calL^n(B_{r_h})$ and
 \begin{equation}\label{equAhomeag}
  \int_{B_{r_h}(x)\setminus \omega_h} |u-\varphi_h|dy \le c r_h \int_{B_{r_h}(x)} |e(u)|dy\,.
 \end{equation}
 Lemma \ref{affine} yields 
 \begin{alignat}1 \nonumber
  |\varphi_h(x)-a_{x,r_h}|\le &\frac{c}{r_h^n} 
   \int_{B_{r_h}(x)\setminus \omega_h} |a_{x,r_h}-\varphi_h|dy
  \\ \le& \frac{c}{r_h^n} 
   \int_{B_{r_h}(x)\setminus \omega_h} |a_{x,r_h}-u|dy
+ \frac{c}{r_h^n} 
   \int_{B_{r_h}(x)\setminus \omega_h} |u-\varphi_h|dy\,.
   \label{eqah0axrh}
   \end{alignat}
At the same time, using the triangle inequality, (\ref{equAhomeag}) implies
 \begin{equation*}
  \int_{B_{r_{h+1}}(x)\setminus (\omega_h\cup \omega_{h+1})} |\varphi_h-\varphi_{h+1}|dy \le c r_h \int_{B_{r_h}(x)} |e(u)|dy\,.
 \end{equation*}
Lemma \ref{affine} now implies
 \begin{equation*}
 \sum_{h}\|\varphi_h-\varphi_{h+1}\|_{L^\infty(B_{r_h}(x),\Rn)} \le c \sum_h\frac{1}{r_h^{n-1}} \int_{B_{r_h}(x)} |e(u)| dy\,,
 \end{equation*}
since $\calL^n(B_{r_{h+1}}\setminus( \omega_h\cup\omega_{h+1}))\ge
(1-2^{-3}-2^{-n-3})\calL^n(B_{r_{h+1}})\ge \frac34 \calL^n(B_{r_{h+1}})$ and the maps $\varphi_h$ are affine.
Recalling that $x\not\in F$,  we have that
 the series $\sum_h|\varphi_h-\varphi_{h+1}|(x)$ converges and we can define $\tilde u(x):=\lim_{h\to\infty} \varphi_h(x)\in\R^n$.
 Then
\begin{alignat*}1
&\frac{1}{r_h^n}\int_{B_{r_h}(x)} |u(y)-\tilde u(x)| dy 
 \\&
 \le
\frac{1}{r_h^n}\int_{B_{r_h}(x)} |u(y)-a_{x,r_h}| dy 
+  c|a_{x,r_h}-\varphi_h(x)|+  c|\varphi_h(x)-\tilde u(x)|\\
& \le 
\frac{1}{r_h^n}\int_{B_{r_h}(x)} |u(y)-a_{x,r_h}| dy+  \frac{c}{r_h^{n-1}} \int_{B_{r_h}(x)} |e(u)| dy+c|\varphi_h(x)-\tilde u(x)|,
\end{alignat*}
where we have used \eqref{eqah0axrh}, \eqref{equAhomeag}.
 The first term converges to zero by (\ref{eqpoincareconasd}), the second one by the fact that $x\not\in F$, and the third one by the definition of $\tilde u$.
 Therefore $x\in \Omega\setminus S_u$. We conclude that
 \begin{equation*}
 S_u \subset \Theta_u\cup F \cup \left( \Omega \setminus 
  \bigcup_k\bigcap_{h\ge k} \Omega_h\right)\,.
 \end{equation*}
We finally show that 
\begin{equation}\label{OmegaJu}
\Omega\setminus \bigcup_k\bigcap_{h\ge k} \Omega_h \subset J_u \cup N 
\end{equation}
for some $\calH^{n-1}$-null set $N$,
from which we can conclude $S_u\subset \Theta_u \cup F\cup N$.
Since $\calH^{n-1}(J_u)<\infty$ 
by \cite[Th. 2.56]{ambrosio} there is a set $N$ with $\calH^{n-1}(N)=0$ such that
\begin{equation*}
 \limsup_{r\to0} \frac{\calH^{n-1}(J_u\cap B_r(x))}{r^{n-1}}=0 \text{ for all } x\in \Omega\setminus (J_u\cup N)\,.
\end{equation*}
In particular, for any such $x$ there is a $k$ such that $x\in \Omega_h$ for any $h\ge k$, so that formula \eqref{OmegaJu} holds.
From $e(u)\in L^p(\Omega;\R^{n\times n})$, Lemma \ref{lemmaFfLp} guarantees that $\calH^{n-1}(F)=0$, 
and the proof is concluded.
\end{proof}

\section{Discussion}\label{discussion}
As mentioned in the introduction, 
in the variational formulation of some mathematical problems in fracture mechanics, 
 the natural analytical framework is given by the set  
$SBD^p(\Omega)$ defined in (\ref{eqefsbdp}).
Due to the regularity generated by the higher integrability of the strain and  the finite $\Hn$-measure of 
the jump set, it has been conjectured that $SBD^p(\Omega)$ functions might have in fact bounded variation. 
In this case one would have in particular $SBD^p(\Omega)\subset SBV(\Omega;\Rn)$, since  Alberti's rank one theorem \cite{Alb93,AlbCsoPre05} implies $|D^cu|(\Omega)\leq\sqrt{2}|E^cu|(\Omega)$ for each $u\in BV(\Omega;\Rn)$. 
Moreover one could reasonably expect that the inclusion above is related to the validity of a Korn's type inequality involving 
the $L^1$-norm of $\nabla u$, the  $L^p$-norm of $e(u)$, and $\Hn(J_u)$. 
A Korn's type inequality for $\nabla u$ alone is, however, 
not enough to guarantee that a field $u$ in $SBD^p(\Omega)$ has bounded variation, since an equivalent of 
Alberti's rank one theorem in $BD$ has not been proved (nor disproved).

In light of the results described in the previous sections, we are now in the position to discuss the possible optimality of the immersion above. First of all let us recall that a generic function $u\in SBD(\Omega)$ has not necessarily bounded variation, even in the case in which $u$ has no jumps. This follows from the counterexample to Korn's inequality in $W^{1,1}$ built in \cite{ornstein}, see \cite[Example 7.7]{amb-cos-dal}. 
A similar idea is the starting point
of the construction presented in Section~\ref{sbd-purejump}, developed 
starting from \cite[Theorem 1]{ContiFaracoMaggi2005}. Indeed,
we have shown that, if no bound on the size of the jump set is given, not even the condition 
$e(u)=0$ $\mathcal{L}^n$-a.e. guarantees that a function $u\in SBD(\Omega)$ has bounded variation.
Analogously, we have constructed a function $u\in BD(\Omega)\setminus BV(\Omega;\R^n)$
with $Eu=E^cu$. 

On the contrary, in some sense one cannot expect an inclusion stronger than $SBD^p(\Omega)\subset SBV(\Omega;\Rn)$, to be precise one cannot hope for a higher integrability of the gradient, as Example \ref{example} shows. 

To conclude this discussion about the connections between $SBD^p$ and $BV$, we mention the result proved in \cite[Theorem A.1]{cha-gia-pon}.
The authors show that any $u\in SBD(\Omega)$ with $e(u)=0$ $\mathcal{L}^n$-a.e. and $\Hn(J_u)<\infty$ has in fact a precise structure, it is a Caccioppoli-affine function. Thanks to the result proved in Section \ref{sbd-caccioppoli} we thus infer that it has actually (generalized) bounded variation. 

\section*{Acknowledgments} 
This work was partially supported 
by the Deutsche Forschungsgemeinschaft through the Sonderforschungsbereich 1060 
{\sl ``The mathematics of emergent effects''}, project A6.

F.~Iurlano thanks the University of Florence for
the warm hospitality of the DiMaI ``Ulisse Dini'', where part of this work was 
carried out during the winter of 2015. 

Part of this work was conceived when M.~Focardi was visiting the University of Bonn in winter 2015. 
He thanks the Institute for Applied Mathematics  for the hospitality and the stimulating scientific 
atmosphere provided during his stay.

M.~Focardi and F.~Iurlano are members of the Gruppo Nazionale per
l'Analisi Matematica, la Probabilit\`a e le loro Applicazioni (GNAMPA)
of the Istituto Nazionale di Alta Matematica (INdAM).

\bibliographystyle{siam}
\bibliography{sbd10}

\end{document}